\pgfplotsset{compat=1.18}
\renewcommand{\autoref}[1]{\Cref{#1}}
\newcommand{\txtb}[1]{{ #1}}
\theoremstyle{plain}
\newtheorem{theorem}{Theorem}[section]
\newtheorem{lemma}[theorem]{Lemma}
\newtheorem{proposition}[theorem]{Proposition}
\newtheorem{corollary}[theorem]{Corollary}
\newtheorem*{conjecture*}{Conjecture}
\theoremstyle{definition}
\newtheorem{remark}[theorem]{Remark}
\newtheorem{definition}[theorem]{Definition}
\newtheorem{example}[theorem]{Example}
\numberwithin{equation}{section}
\numberwithin{figure}{section}
\newcommand{\Graph}{\mathcal G}
\newcommand{\HGraph}{\mathcal{H}} % (Yet another) metric graph
\newcommand{\EdgeSet}{\mathcal{E}} % Set of edges of a metric graph
\newcommand{\VertexSet}{\mathcal{V}} % Set of vertices of a metric graph
\newcommand{\Partition}{\mathcal{P}}
\def\:{\thinspace:\thinspace}
\newcommand{\R}{\mathbb{R}}
\newcommand{\N}{\mathbb{N}}
\newcommand{\G}{\mathcal{G}}
\DeclareMathOperator{\dist}{dist}
\title[On Courant-Type Bounds via Neumann Domains on Quantum Graphs]{On Courant-type bounds and spectral partitioning via Neumann domains on quantum graphs}
\author{Lu\'is N. Baptista}
\author{Matthias Hofmann}
\address{Departamento de Ci\^encias Matem\'aticas, Faculdade de Ci\^encias da Universidade de Lisboa, 1749-016 Campo Grande, Lisboa, Portugal}
\email{
lcbaptista@ciencias.ulisboa.pt}
\address{Fakult\"at f\"ur Mathematik und Informatik, FernUniversit\"at in Hagen, 58084 Hagen, Germany}
\email{matthias.hofmann@fernuni-hagen.de}
\thanks{\emph{Acknowledgements.} The authors would like to thank James Kennedy for helpful comments. M. Hofmann was supported by the Funda\c{c}\~ao para a Ci\^encia e a Tecnologia (FCT), Portugal, within the scope of the projects ``Spectral Optimal Partitions: geometric and numerical analysis'', reference \href{https://doi.org/10.54499/2023.13921.PEX}{2023.13921.PEX}. The figures in this article were generated with the assistance of artificial intelligence tools based on the GPT-4 architecture and adjusted as necessary for this article.
}
\date{}
\subjclass[2020]{34B45, 35P15, 35R02, 49Q10, 81Q35}
\keywords{Node estimates; Quantum graphs; Laplacian; spectral minimal partition; spectral geometry}
\begin{document}

\begin{abstract}
We study the structure of eigenfunctions of the Laplacian on quantum graphs, with a particular focus on Morse eigenfunctions via nodal and Neumann domains. Building on Courant-type arguments, we establish upper bounds for the number of nodal points and explore conditions under which Neumann domains of eigenfunctions correspond to minimizers of a class of spectral partition problems, often known as spectral minimal partitions. Our main focus is the analysis on tree graphs, where we characterize the spectral energies of such partitions and relate them to the eigenvalues of the Laplacian under genericity assumptions. Notably, we introduce a notion analogous to Courant-sharpness for Neumann counts and identify when spectral minimal partitions coincide with partitions formed by Neumann domains of eigenfunctions.
\end{abstract}

\maketitle

\section{Introduction}

The study of quantum graphs has garnered significant attention in recent years, particularly in the context of understanding the behavior of the Laplacian operator on these structures. Quantum graphs, which are networks of vertices and edges equipped with differential operators, serve as a versatile model for various physical and mathematical phenomena; see~\cite{Mug19} and~\cite{BeKu13} for background references.

One area of interest within this field is the investigation of Neumann domains (see~\cite{ABBE-ND-20}). Neumann domains describe a natural way to partition a domain by cutting through extremal points of an eigenfunction and provide an alternative viewpoint on the nodal properties of eigenfunctions. They can be seen as a counterpart of the well-studied nodal domains.

\txtb{In recent years, \emph{spectral minimal partitions} have become an essential tool for understanding the geometry of eigenfunctions. In the classical setting of bounded Euclidean domains one typically works with Dirichlet boundary conditions and partitions that minimize the maximum of the first Dirichlet eigenvalue on the subdomains; see, for instance,~\cite{CTV05, HHT09, BNHe17, BBN18}. 

On quantum graphs one can, in addition, study spectral minimal partitions for the Laplacian with standard (Neumann–Kirchhoff) vertex conditions by minimizing the maximum of the \emph{second} eigenvalue on each cluster; this Neumann-type variant has been developed in~\cite{KeKuLeMu21, HKMP21, HoKe21}. In parallel, two associated notions of domains play a central role: \emph{nodal domains}, i.e.\ the connected components of the complement of the nodal set where an eigenfunction has a fixed sign, and \emph{Neumann domains}, obtained by cutting the underlying space or graph along extremal points of the eigenfunction; see~\cite{BaBeRaSm12, BBW15, ABBE-ND-20}.}

An interesting development in this area is the discovery of interlacing estimates that relate Neumann and Dirichlet-type partitions (see~\cite{HoKe21}). These estimates, which connect the spectral properties of Neumann partitions with those of Dirichlet partitions, are closely related to the nodal partitions of eigenfunctions and offer a deeper understanding of the interplay between different types of spectral partitions.

In this paper, we explore nodal properties of eigenfunctions and the relation between Neumann spectral minimal partitions and Neumann partitions of eigenfunctions. By examining these connections, we aim to shed light on the underlying principles governing the spectral properties of quantum graphs and their applications. More precisely, we will
\begin{enumerate}
\item establish bounds on the number of nodes of eigenfunctions;
\item provide conditions under which partitions formed by Neumann domains correspond to spectral minimal partitions;
\item show that, under suitable sharpness conditions, the spectral minimal partitions coincide with the $n$th eigenvalue of the standard Laplacian.
\end{enumerate}

Importantly, we remove the commonly used genericity assumption whenever possible, proving results without relying on it -- a significant step forward, since this assumption is prevalent in the literature on nodal and Neumann domains (see, e.g.,~\cite{Ber08, ABB18, ABBE-ND-20, GSW04}). Furthermore, we uncover connections between Neumann partitions and Neumann domains, an area that is substantially more challenging than its nodal counterpart. These results not only deepen the understanding of Neumann structures but also offer optimism for extending analogous results to manifolds and domains, where such progress has been elusive.

The study of nodal properties of eigenfunctions goes back to~\cite{Stu36} in the one-dimensional case. In higher dimensions, one can prove only an estimate on the number of nodal domains, known as Courant's nodal theorem (see~\cite{Cou23}), which shows that the $n$th eigenfunction has at most $n$ nodal domains. By Pleijel's theorem it can be shown that, in higher dimensions, this inequality is strict for almost all eigenfunctions (see~\cite{Ple56}). In recent years, research on this topic has shifted to nodal properties of eigenfunctions on quantum graphs, where, due to the failure of unique continuation, only weaker bounds on the number of nodal domains hold (see~\cite{Ber08}), and a Pleijel-type theorem was developed in~\cite{HKMP21pleijel}. For tree graphs (see~\cite{Sch06}) it can be shown, under certain genericity assumptions, that the eigenfunctions each have precisely $n$ nodal domains and $n-1$ nodes. In order to prove bounds on the number of nodal domains without any genericity assumption, we need to adapt Courant's original argument.

A second line of investigation concerns spectral minimal partitions and their relation to nodal domains of eigenfunctions. These partitions have been studied both in Euclidean domains (see~\cite{CTV05, HHT09, BNL17, BNHe17, BBN18}) and recently on quantum graphs (see~\cite{KeKuLeMu21, HKMP21, HoKe21}). Neumann domains, which arise from gradient flow lines of eigenfunctions, are known to contain at least one node in each nodal component, both in domains and on graphs~\cite{AB19}. In the quantum graph case, we will show that when there is exactly one node in each Neumann domain of an eigenfunction, a connection can be made between spectral minimal partitions and nodal domains.

A third objective is to understand when spectral minimal partitions correspond to nodal domains of eigenfunctions. This correspondence holds when the partition is bipartite. However, as noted in~\cite{HHT09}, due to Pleijel’s result, most spectral minimal partitions in higher dimensions do not arise from eigenfunctions. Nevertheless, by using double coverings if necessary, one can construct solutions to the eigenvalue equation via linear combinations of their corresponding minimizers under certain matching conditions (see, e.g.,~\cite{CTV05, HHT09}). For Neumann partitions such a construction is more complicated, and additional matching conditions for spectral minimizers become necessary, since linear combinations of the corresponding eigenfunctions might not even be continuous. In the quantum graph case that we discuss, we will follow a similar strategy, introducing a set of matching conditions for the spectral minimizers that will allow the construction of eigenfunctions.

We now present the main results of this work, accompanied by commentary and examples that highlight their limitations and demonstrate their optimality. Throughout, we consider the standard Laplacian operator on a compact metric graph $\mathcal{G}$, whose eigenvalues form an ordered sequence
\[
0 = \mu_1(\mathcal{G}) < \mu_2(\mathcal{G}) \leq \mu_3(\mathcal{G}) \leq \dots,
\]
where $\mu_k(\mathcal{G})$ denotes the $k$th eigenvalue of the Laplacian on $\mathcal{G}$. Our focus will often be on the first non-zero eigenvalue, the spectral gap, denoted by $\mu_2(\mathcal{G})$. For a positive integer $k$, we denote by $\mathcal{L}^N_k(\mathcal{G})$ the minimal energy of a Neumann spectral $k$-partition, defined as
\begin{equation}\label{spm neumann intro}
\mathcal{L}^N_k(\mathcal{G}) := \inf_{\mathcal P \in \mathfrak C_k
(\G)} \max_{i=1,\ldots, k} \mu_2(\G_i)
\end{equation}

\txtb{where $\mathfrak C_k(\mathfrak G)$ denotes the set of exhaustive
$k$-partitions of $\mathcal G$ into connected, pairwise disjoint clusters
$\mathcal G_1,\ldots,\mathcal G_k$ in a cut graph of $\mathcal G$;
see Section~\ref{sec:preliminaries} for the precise definition.}

In addition, we require the notions of \emph{Neumann domains}, \emph{equipartitions}, and \emph{Morse} and \emph{generic} eigenfunctions.

\txtb{Precise definitions are given in Section~\ref{sec:preliminaries}.
\begin{itemize}
\item A Neumann domain of an eigenfunction is a connected component obtained
after cutting the graph at its Neumann points, that is, at non-nodal
interior extremal points.
\item An equipartition is a partition for which all clusters have the same
relevant spectral energy.
\item A Morse eigenfunction is one whose restriction to each edge has no
degenerate critical point.
\item A generic eigenfunction is a Morse eigenfunction associated with a
simple eigenvalue, which does not vanish at vertices and has no extremal
point at an interior vertex; see Definition~\ref{def:genericmorse}.
\end{itemize}

These informal descriptions are only meant to make the statements below
self-contained; the precise graph-theoretic definitions are collected in
Section~\ref{sec:preliminaries}.

We stress that, in this article, a \emph{nodal point} of an eigenfunction means a zero at which the eigenfunction is not identically zero in any neighbourhood. In particular, points lying on an edge where the eigenfunction vanishes identically are \emph{not} counted as nodal points (see Section~\ref{sec:preliminaries} for the precise definition).}

\medskip

First, we establish a Courant-type bound for quantum graphs:

\begin{theorem}\label{Courant-type}
Let $\mathcal{G}$ be a tree graph and $\psi$ an eigenfunction associated with $\mu_k(\mathcal{G})$. Then $\psi$ has at most $k-1$ nodal points.
\end{theorem}

This result generalizes Courant’s classical theorem to tree graphs and is sharp, as shown by Example~\ref{ex:star}. However, Example~\ref{ex:tadpole} demonstrates that the bound fails for graphs with cycles, emphasizing the necessity of restricting to trees. Unlike earlier works (e.g.,~\cite{Ber08, Sch06}), our proof does not rely on genericity assumptions.

Unlike in the nodal case, where, generically, the nodal count of the $k$th eigenfunction is $k$, the situation for Neumann domains is different. On an interval, the $k$th eigenfunction has exactly $k-1$ internal extrema and hence $k-1$ Neumann domains. Similarly, for an eigenfunction on a tree graph, the number of Neumann domains is bounded above by $k-1$ as a consequence of Theorem~\ref{Courant-type}, since each Neumann domain contains a nodal point:

\begin{corollary}\label{Neumann Courant-type}
Let $\mathcal{G}$ be a tree graph and $\psi$ an eigenfunction associated with $\mu_k(\mathcal{G})$. Then $\psi$ has at most $k-1$ Neumann domains.
\end{corollary}

\txtb{In the generic case, this estimate corresponds to the bound obtained in
\cite[Theorem~2.2(1)]{AB19}; Corollary~\ref{Neumann Courant-type}
extends it to the non-generic setting on trees.}
\medskip

Second, we provide necessary and sufficient conditions for the partition energy given by the Neumann domains of an eigenfunction to reflect the eigenvalue structure. Our correspondence relies on the sharpness of the Courant-type bound on the Neumann domains in Corollary~\ref{Neumann Courant-type}.

\begin{theorem}\label{sufficient and necessary nomain domains equipartition}
If $\psi_k$ is a Morse eigenfunction on a tree graph with exactly $k-1$ Neumann domains $\{\mathcal{H}_1,\dots,\mathcal{H}_{k-1}\}$, then
\[
\mu_k(\mathcal{G}) = \mu_2(\mathcal{H}_i) \quad \text{for all } i.
\]
\end{theorem}

This result characterizes when Neumann domains form an equipartition with respect to the second eigenvalue. Example~\ref{ex:tadpole} illustrates that this does not extend to graphs with cycles.

\medskip

Third, we characterize the interplay between Neumann domains and spectral minimal partitions. We also require a notion of \emph{spectral minimizers} of our spectral minimization problem:

\txtb{\begin{definition}
Let $\mathcal{G}$ be a graph and let $(\mathcal{G}_1, \dots, \mathcal{G}_k)$ be a $k$-spectral minimal partition of $\mathcal{G}$. The \emph{spectral minimizers} are the functions $\psi_{2,i}$ (unique up to scaling) for $i=1,\ldots, k$, each associated with the second eigenvalue $\mu_2(\mathcal{G}_i)$ of the standard Laplacian on $\mathcal{G}_i$.
\end{definition}

\begin{theorem}\label{main2}
Let $\mathcal G$ be a tree graph whose eigenfunctions are generic. Then the following statements are equivalent: 
\begin{enumerate}[(i)]
\item the Neumann domains of the $k$th eigenfunction form a $(k-1)$-spectral minimal equipartition for \eqref{spm neumann intro}; 
\item the spectral minimal $k$-partition is an equipartition and the spectral minimizers of $\mathcal L_k^N(\mathcal G)$ are generic.
\end{enumerate}
Moreover, if either of these statements holds, then
\begin{equation}\label{equation spm and mun}
\mathcal{L}^N_k(\mathcal{G}) = \mu_{k+1}(\mathcal{G}).
\end{equation}
\end{theorem}}

% \medskip

% Finally, we connect spectral minimal partitions to Neumann spectra:

% \begin{theorem}\label{equality spm and mun}
% Let $\mathcal{G}$ be a tree graph. If all eigenfunctions and spectral minimizers are generic, then
% \[
% \mathcal{L}^N_k(\mathcal{G}) = \mu_{k+1}(\mathcal{G}).
% \]
% \end{theorem}

\begin{remark}
    The equality \eqref{equation spm and mun} characterizes the energy of the spectral minimal $k$-partition in terms of the $(k+1)$st eigenvalue. Example~\ref{ex:path} demonstrates that this characterization is sharp, whereas Example~\ref{ex:star} shows that the genericity assumption is essential. 
    
    By \cite[Theorem~2.2]{AB19}, the Neumann count is generically distributed according to a probability law which, for a certain class of graphs, has been shown to be binomial. Consequently, the conclusion of Theorem~\ref{main2}, namely that the Neumann domains of the $k$th eigenfunction form a $(k-1)$-partition, holds only with a probability that depends on the number of degree-one vertices. The interval is exceptional in this regard: in that case (see also Example~\ref{ex:path}), the conclusion holds for every eigenfunction. This phenomenon can be traced back to the fact that unless a tree graph is an interval minimizers of $\mathcal L_k^N(\mathcal G)$ do not, in general, appear to be equipartitions. This contrasts with the Dirichlet case, where eigenfunctions on tree graphs are generically Courant-sharp, and consequently every spectral minimal partition arises as the nodal partition of an eigenfunction. In the graph setting, this phenomenon was first discussed in the foundational work \cite{KeKuLeMu21}, extending the classical results for domains established in \cite{HHT09}. 
    
\end{remark}

Together, these results establish a framework linking nodal and Neumann structures to spectral optimization, removing or at least weakening genericity assumptions where possible and clarifying when such assumptions are indispensable. The accompanying examples illustrate sharpness and limitations.

Let us now summarize the structure of the article. In Section~\ref{sec:preliminaries} we introduce the eigenvalue and spectral optimal partition problem and discuss basic surgery principles. In Section~\ref{sec:mainresults} we discuss and prove our main results. We study the optimality of our results via examples in Section~\ref{sec:examples}.

\section{Preliminaries: Nodal and Neumann partitions on Metric Graphs}\label{sec:preliminaries}

\subsection{Laplacian on quantum graphs}
The present paper is strongly motivated by the setting introduced in~\cite{KeKuLeMu21, HoKe21, HKMP21} among others, which we briefly recall and summarize.

A \emph{(compact) metric graph} $\G$ is a finite disjoint union of bounded intervals $(I_e)_{e\in \EdgeSet}$ connected by gluing their endpoints. The set $\VertexSet$ of glued endpoints is referred to as the \emph{vertex set} of $\G$, and the \emph{edges} $e\in \EdgeSet$ are identified with the corresponding intervals $I_e$.

On a subgraph $\HGraph$ of $\Graph$ we define the function spaces $L^2(\mathcal H)$, $H^1(\HGraph)$ and $H_0^1(\HGraph; \VertexSet_0)$, the latter for a given set $\VertexSet_0$ of vertices in $\HGraph$. We then consider the quadratic Dirichlet form
\[
a(f):= \int_{\HGraph} |f'(x)|^2\, \mathrm dx
\]
on the domain $H^1(\HGraph)$ or $H_0^1(\HGraph; \VertexSet_0)$. In the former case, the associated operator is the Laplacian with \emph{standard} or \emph{natural} vertex conditions. A metric graph with an associated differential operator is usually referred to as a \emph{quantum graph}. Let us also recall the scalar product on $L^2(\G)$, defined as
\[
\langle f, g \rangle := \int_{\G} f \overline{g} \, dx \equiv \sum_{k=1}^{m} \int_{e_k} f(x) \overline{g(x)} \, dx.
\]
By standard results, the Laplacian on a compact metric graph has discrete, real spectrum. These eigenvalues can be characterized via the min–max principle
\begin{equation*}
\mu_k(\HGraph) = \min_{\substack{X \subset H^1(\HGraph)\\ \dim(X) =k}} \max_{f\in X \setminus \{0\}} \left \{ \dfrac{\int_{\mathcal H} |f'|^2\, \mathrm dx}{\int_{\mathcal H} |f|^2\, \mathrm dx}\right \}.
\end{equation*}

We will consider families of Laplacians and eigenvalue problems, each defined on the clusters of a partition of $\Graph$. These are closely related to the concept of cutting a graph, which, together with the concept of gluing, will be essential tools in this article.

The notion of \emph{cutting a graph} is a useful one and has a precise technical definition, as in~\cite{HoKe21}. Among other things, it has appeared frequently in the context of spectral geometry of graphs as a prototypical ``surgery principle'' (see, for example,~\cite{BKKM19} and the references therein) and was also used in~\cite{KeKuLeMu21} as the basis for defining partitions of graphs. We start with the basic definition.

\begin{definition}[Cutting graphs; Partitions on graphs]
\label{def:cut}
Let $\Graph$ and $\Graph'$ be metric graphs. Then $\Graph'$ is a \emph{cut} (or \emph{cut graph}) of $\Graph$ if
\begin{enumerate}[(i)]
\item $\Graph$ and $\Graph'$ have a common edge set, and
\item for all $v'\in \VertexSet'$ there exists $v\in \VertexSet$ such that $v' \subset v$.
\end{enumerate}
Informally, $\Graph'$ is obtained from $\Graph$ by splitting some vertices into several copies, without changing the edge set.
\end{definition}

We are now ready to define $k$-partitions on graphs.

\begin{definition}\label{def:partition}
Let $k\ge 1$ and let $\Graph=(\VertexSet, \EdgeSet)$ be a metric graph. Then:
\begin{enumerate}[(i)]
\item $\Partition:=(\Graph_1, \ldots, \Graph_k)$ is a \emph{$k$-partition} of $\Graph$ if there exists a cut $\Graph'$ such that $\Graph_1, \ldots, \Graph_k$ are connected components of $\Graph'$. We refer to the components $\Graph_1,\ldots,\Graph_k$ as \emph{clusters};
\item $\Partition=(\Graph_1, \ldots, \Graph_k)$ is an \emph{exhaustive $k$-partition} if $\Graph'= \sqcup_{i=1}^k \Graph_i$ is a cut graph of $\Graph$ and $\Graph_1, \ldots, \Graph_k$ are its connected components.
\end{enumerate}
\end{definition}

We denote the class of all $k$-partitions of $\Graph$ by $\mathfrak C_k(\Graph)$. Unless explicitly stated otherwise, we will always work with exhaustive partitions, so that any $\mathcal P=(\G_1,\dots,\G_k)\in\mathfrak C_k(\G)$ consists of connected, pairwise disjoint subgraphs $\G_i$ whose union is a cut graph of $\G$. 

We can now define spectral energies for partitions:

\begin{definition}
Consider a graph $\G$ and the set of its $k$-partitions $\mathfrak C_k(\G)$. For $\mathcal{P} \in \mathfrak C_k(\G)$, $\mathcal{P}=(\G_1,...,\G_k)$, define
\begin{equation}
\Lambda_k^N (\mathcal{P})= \max_{i=1,...,k} \mu_2 (\G_i),
\end{equation}
and
\begin{equation}\label{spm neumann}
\mathcal{L}_k^N (\G) = \min_{\mathcal{P} \in \mathfrak{C}_k(\G)} \Lambda^N (\mathcal{P}).
\end{equation}
\end{definition}

\begin{remark}
We refer to a partition $\mathcal{P}^*\in \mathfrak C_k(\G)$ attaining~\eqref{spm neumann} as a spectral minimal partition (in the Neumann sense), i.e.
\[
\Lambda_k^N (\mathcal{P}^*)=\mathcal{L}_k^N (\G).
\]
\end{remark}

\txtb{We can also consider a similar problem where instead of using the spectral gap we impose Dirichlet vertex conditions. On a subgraph $\HGraph$ of $\G$ we consider Dirichlet conditions on the (topological) boundary of $\mathcal H$ \txtb{relative to $\Graph$}, then the Dirichlet eigenvalues can be characterized via minmax principle
\begin{equation*}
\lambda_k(\HGraph) = \min_{\substack{X \subset H^1_0(\HGraph, \partial \HGraph)\\ \dim(X) =k}} \max_{f\in X \setminus \{0\}} \left \{ \dfrac{\int_{\mathcal H} |f'|^2\, \mathrm dx}{\int_{\mathcal H} |f|^2\, \mathrm dx}\right \}.
\end{equation*}
and use the first eigenvalue $\lambda_1$ to define Dirichlet-type spectral minimal partitions.} We refer to the corresponding energies by $\Lambda_k^D$ and define the corresponding optimal energy
\begin{gather*}
\mathcal L_k^D(\mathcal G) := \min_{\mathcal P \in \mathfrak C_k(\mathcal G)} \Lambda^D(\mathcal P).
\end{gather*}
We will not further investigate this quantity here, but we will use interlacing inequalities studied in~\cite{HoKe21} to obtain spectral estimates between Neumann eigenvalues and spectral minimal partition energies in the Neumann sense. For a more precise definition we refer to the introductory article~\cite{KeKuLeMu21}.

\subsection{Morse and generic eigenfunctions}

On quantum graphs, eigenfunctions can vanish identically on entire edges, especially in the presence of symmetries or special vertex conditions. This vanishing complicates the direct application of classical tools like Morse theory, which relies on the non-degeneracy of critical points. In order to adapt classical results to the quantum graph case, we will consider a non-degeneracy condition for eigenfunctions of $\G$ by introducing \emph{Morse eigenfunctions}.

\begin{definition}[\cite{ABBE-ND-20}] \label{def:genericmorse}
Let $\G$ be a nontrivial (not a loop graph) standard (equipped with the standard Laplacian) graph and $f$ an eigenfunction of $\G$.
\begin{enumerate}
\item $f$ is a \emph{Morse} eigenfunction if for each edge $e$, $f|_e$ is a Morse function, i.e.\ at no point in the interior of an edge do the first and second derivatives vanish simultaneously.
\item We call $f$ a \emph{generic} eigenfunction if it is Morse and satisfies the following properties:
\begin{itemize}
\item $f$ corresponds to a simple eigenvalue;
\item $f$ does not vanish at any vertex;
\item $f$ has no extremal points at interior vertices.
\end{itemize}
\end{enumerate}
\end{definition}

A simple criterion ensuring that an eigenfunction is Morse is given by the following lemma:

\begin{lemma}[\cite{ABBE-ND-20}]
Let $f$ be a non-constant eigenfunction. Then $f$ is Morse if and only if there exists no edge $e$ such that $f|_e \equiv 0$.
\end{lemma}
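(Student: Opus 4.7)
The plan is to exploit the second-order ODE satisfied by the eigenfunction on each edge and to combine it with the uniqueness of solutions to the corresponding initial value problem.

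First I would clear away the trivial case of the zero eigenvalue. On a connected standard graph the only eigenfunctions for $\lambda = 0$ are the constants, so the hypothesis that $f$ is non-constant forces $\lambda > 0$. Fix an edge $e$ and identify $f|_e$ with a smooth function on an interval; by the eigenvalue equation it satisfies the pointwise identity
\[
f''(x) = -\lambda\, f(x), \qquad x \in \interior(e).
\]
This single identity drives the whole argument.

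For the easy direction, if $f|_e \equiv 0$ for some edge $e$, then both $f'$ and $f''$ vanish identically on the interior of $e$, so $f$ fails to be Morse on $e$. For the nontrivial direction I would contrapositively assume that $f$ is not Morse, i.e.\ there exists an interior point $x_0$ of some edge $e$ with $f'(x_0) = 0$ and $f''(x_0) = 0$. Substituting into the ODE gives $\lambda f(x_0) = -f''(x_0) = 0$, and since $\lambda \neq 0$ we conclude $f(x_0) = 0$. Thus $f|_e$ solves the linear second-order ODE $u'' + \lambda u = 0$ with initial data $u(x_0) = u'(x_0) = 0$. By uniqueness of solutions to such an initial value problem on the interval $I_e$, we obtain $f|_e \equiv 0$, which is the negation of the other side of the equivalence.

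The main (only) subtlety is making sure the initial-value-problem uniqueness is invoked on the edge where the degeneracy occurs, not across a vertex: the condition in the definition of Morse is phrased at interior points of a single edge, and the ODE $u'' + \lambda u = 0$ is a regular linear ODE on the open interval $I_e$, so the standard Picard--Lindel\"of argument applies without needing any vertex condition. No genericity or further structural assumption on $\G$ is required.
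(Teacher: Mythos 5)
The paper does not prove this lemma at all --- it is quoted verbatim from \cite{ABBE-ND-20} --- so there is no in-paper argument to compare against. Your proof is correct and is essentially the standard one: the forward direction is immediate, and the converse reduces, via $f''=-\lambda f$, to uniqueness for the initial value problem $u''+\lambda u=0$, $u(x_0)=u'(x_0)=0$ on the open edge; the hypothesis that $f$ is non-constant enters only to guarantee $\lambda\neq 0$ (which, as you note, uses connectedness of $\G$ --- on a disconnected graph a locally constant eigenfunction for $\lambda=0$ would violate the equivalence).
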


We can then define nodal and Neumann domains for Morse functions:

\begin{definition}\label{nodalpoint}
Let $f \in C(\G)$. We define the following useful concepts:
\begin{enumerate}
\item The set $N(f):=\{ x \in \G: f(x)=0 \}$ is the \emph{nodal set} of $f$, and we refer to the connected components of $\G \setminus N(f)$ as the \emph{nodal domains} of $f$;
\item A point $x\in N(f)$ is a \emph{nodal point} if and only if $f \not\equiv 0$ in any neighbourhood of $x$;
\item We denote the number of nodes of $f$ by
\[
\phi(f):=\#\{x \in \G: x \text{ is a nodal point of } f \};
\]
\item $\HGraph \subset \G$ is a nodal domain of $f$ if it is the closure of a connected component of $\G \setminus N(f)$;
\item We denote the nodal count, i.e.\ the number of nodal domains, by $z(f)$.
\end{enumerate}
\end{definition}

For Neumann points we follow~\cite[Definition 7.3]{ABBE-ND-20}. Let $V_1(\G):=\{v\in\mathcal V(\G):\deg(v)=1\}$ be the set of degree-one vertices.

\begin{definition}[\cite{ABBE-ND-20}, Definition 7.3]
Let $\psi$ be an eigenfunction. Then:
\begin{enumerate}
\item A \emph{Neumann point} of $\psi$ is a point neither in the nodal set nor located at a vertex of degree one such that its normal derivative at each incident edge is zero. We define the set of Neumann points by
\[
\mathcal{N}(\psi):=\{x \in \G \setminus (N(\psi) \cup V_1(\G)): x \text{ is an extremal point of } \psi\}.
\]
\item A \emph{Neumann domain} of $\psi$ is the closure of a connected component of $\G \setminus \mathcal{N}(\psi)$.
\end{enumerate}
\end{definition}

\subsection{Surgery principle on Neumann partitions}

In this section we discuss basic monotonicity properties of the Neumann energies under graph cuts that preserve connectivity.

\begin{lemma}\label{lem:surgery}
Let $\Partition=(\Graph_1, \ldots, \Graph_k)$ be a $k$-partition. Suppose that any partition $\Partition'=(\Graph_1', \ldots, \Graph_k')$ for which each $\Graph_i'$ is a cut of $\Graph_i$ does not disconnect $\Graph_i$ for all $i=1,\ldots, k$. Then
\begin{equation*}
\nenergy[k,p](\Partition') \le \nenergy[k,p](\Partition).
\end{equation*}
\end{lemma}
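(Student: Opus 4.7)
The plan is to reduce the inequality to the standard surgery principle that cutting a vertex without disconnecting the graph can only decrease (or preserve) each Neumann-type eigenvalue, and then take the maximum over the clusters. The hypothesis is designed precisely to guarantee that each cluster of $\Partition'$ is obtained from the corresponding cluster of $\Partition$ by cuts that do not split it into several connected components, so that $\Partition'$ is itself a bona fide $k$-partition of $\Graph$ and that the cluster-wise comparison between $\mu_2(\Graph_i')$ and $\mu_2(\Graph_i)$ (or its $p$-Laplacian analogue) is meaningful.

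First, I would unpack the notion of cutting from \autoref{def:cut}: passing from $\Graph_i$ to $\Graph_i'$ corresponds to replacing some vertex $v\in\VertexSet(\Graph_i)$ by a collection of new vertices $v_1',\ldots,v_r'\subset v$ which partition the half-edges incident to $v$. The essential observation is that a function $f\in H^1(\Graph_i)$ is continuous across $v$, hence a fortiori continuous across each $v_j'$, so the natural identification yields an inclusion of Sobolev spaces
\[
H^1(\Graph_i)\;\hookrightarrow\;H^1(\Graph_i'),
\]
and this inclusion preserves both the Dirichlet form $a(f)=\int |f'|^2\,\mathrm dx$ and the $L^2$-norm (since cutting does not alter the edge set). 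The same holds in the $p$-setting for the energies $\int |f'|^p\,\mathrm dx$ and $\int|f|^p\,\mathrm dx$.

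Second, I would invoke the min-max characterisation of $\mu_2$ recalled in the preliminaries (and its well-known $p$-Laplacian counterpart via an appropriate topological index). Because every admissible test subspace $X\subset H^1(\Graph_i)$ is also admissible in $H^1(\Graph_i')$ with unchanged Rayleigh quotient, the infimum over admissible subspaces can only decrease when passing to the cut graph, so
\[
\mu_2(\Graph_i')\;\le\;\mu_2(\Graph_i),\qquad i=1,\ldots,k,
\]
(and likewise for the $p$-analogue entering $\nenergy[k,p]$). Taking the maximum over $i\in\{1,\ldots,k\}$ on both sides then yields
\[
\nenergy[k,p](\Partition')=\max_{i}\mu_2(\Graph_i')\;\le\;\max_{i}\mu_2(\Graph_i)=\nenergy[k,p](\Partition),
\]
which is the desired inequality.

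The only real subtlety, and thus the main point to check carefully, is ensuring that the hypothesis ``each cut does not disconnect $\Graph_i$'' is genuinely used so that $\Partition'$ remains a $k$-partition (otherwise the left-hand side would involve a different number of clusters, and the max would be taken over a different index set). Everything else is a direct application of min-max together with the inclusion of function spaces induced by cutting; no delicate spectral argument beyond the classical surgery principle is required.
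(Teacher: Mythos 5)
Your proof is correct and follows essentially the same route as the paper: the paper simply cites the standard surgery principle \cite[Theorem~3.4]{BKKM19} to get $\mu_2(\Graph_i')\le\mu_2(\Graph_i)$ for each cluster and then takes the maximum, whereas you additionally spell out the underlying min-max/Sobolev-inclusion argument behind that principle and the role of the non-disconnection hypothesis. No gaps.
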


\begin{proof}
Since by assumption $\Graph_j'$ is a cut graph of $\Graph_j$ for $j=1,\ldots, k$, we have
\begin{equation*}
\mu_2(\Graph_j') \leq \mu_2(\Graph_j)
\end{equation*}
by the standard surgery principle (see~\cite[Theorem~3.4]{BKKM19}), and the statement follows immediately.
\end{proof}

\begin{remark}
Note that the Dirichlet energy follows the opposite principle: the energy of a partition $\Partition'=(\Graph_1', \ldots, \Graph_k')$ where each $\Graph_i'$ is a cut of $\Graph_i$ satisfies
\begin{equation*}
\denergy[k,p](\Partition') \ge \denergy[k,p](\Partition).
\end{equation*}
\end{remark}

An immediate consequence of Lemma~\ref{lem:surgery} is:

\begin{proposition}\label{thm:mainresultequiv}
There exists a spectral minimal $k$-partition $\Partition=(\Graph_1, \ldots, \Graph_k)$ such that each $\Graph_1, \ldots, \Graph_k$ is a tree graph. In particular,
\begin{equation}
\mathcal{L}^N_{k}(\G)=\min_{\G' \subset \G} \mathcal{L}^N_{k}(\G'),
\end{equation}
where we take the minimum over the set of cut graphs of $\G$ that are trees.
\end{proposition}

\begin{proof}
By~\cite[Thm.~4.7]{KeKuLeMu21} there exists an optimal $k$-partition $\Partition=(\Graph_1, \ldots, \Graph_k)$ of $\Graph$, which by~\cite[Corollary~3.3]{HoKe21} can be assumed exhaustive. If we consider any partition $\Partition' =( \Graph_1', \ldots, \Graph_k')$, where $\Graph_1', \ldots, \Graph_k'$ are tree graphs obtained by introducing a finite number of cuts, then by Lemma~\ref{lem:surgery} we have
\begin{equation}\label{eq:candidateformin}
\noptenergy[k,p](\Graph) =\nenergy[k,p](\Partition) \ge \nenergy[k,p](\Partition').
\end{equation}
Note that $\Graph_1' \sqcup \ldots \sqcup \Graph_k'$ is a cut graph of $\Graph$, and upon gluing the individual connected components together at $k-1$ vertices, we obtain a tree graph $\widehat \Graph$ (referred to as the minimal cut graph in~\cite[Definition~2.10]{HoKe21}). Thus
\[
\nenergy[k,p](\Partition') \ge \noptenergy[k,p](\widehat \Graph).
\]
Consider now any cut graph $\Graph'$ of $\Graph$ which is a tree graph. Then any partition of $\Graph'$ is also a partition of $\Graph$ (see Definition~\ref{def:partition}). Hence
\begin{equation}\label{eq:upperboundcut}
\noptenergy[k,p](\Graph') \ge \noptenergy[k,p](\Graph)
\end{equation}
and, since $\Graph'$ was arbitrary, combining~\eqref{eq:candidateformin} and~\eqref{eq:upperboundcut} we conclude
\begin{equation*}
\mathcal{L}^N_{k}(\G)=\min_{\G' \subset \G} \mathcal{L}^N_{k}(\G').
\end{equation*}
\end{proof}

\section{Comparison to Neumann partitions of eigenfunctions}\label{sec:mainresults}
In this section we derive our main results. First, we review a result on spectral inequalities between eigenvalues and the spectral minimal $k$-energy of a given graph.

\subsection{Results on nodal partitions of eigenfunctions}

Let us now explore the relation between nodal properties of an eigenfunction of the standard Laplacian and the position of its eigenvalue in the spectrum, proving in this context Theorem~\ref{Courant-type}.

We begin with a known result that holds for generic eigenfunctions.

\begin{lemma} \label{nodal domains of trees eigenfuntion}
Let $\G$ be a tree graph and $\psi_k$ generic. Then
\[
z(\psi_k)=k
\quad\text{and}\quad
\phi(\psi_k)=k-1.
\]
\end{lemma}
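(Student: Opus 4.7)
The plan is to decompose the statement into two independent identities: first, $z(\psi_k) = \phi(\psi_k) + 1$, which is a purely topological consequence of the tree structure together with the genericity hypothesis; and second, $z(\psi_k) = k$, which combines a Courant-type upper bound with a Sturm-type lower bound specific to trees.

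For the first identity, I would argue as follows. By the genericity assumption in Definition~\ref{def:genericmorse}, $\psi_k$ does not vanish at any vertex of $\G$, so every nodal point of $\psi_k$ lies in the interior of some edge. Since $\psi_k$ is Morse, these points are isolated, so $\phi(\psi_k)$ is finite. Because $\G$ is a tree, the removal of any single interior point of an edge produces exactly two connected components. Iterating this surgery over all $\phi(\psi_k)$ nodal points increases the component count by exactly $\phi(\psi_k)$, which yields $z(\psi_k) = \phi(\psi_k) + 1$. It therefore suffices to prove $z(\psi_k) = k$.

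The upper bound $z(\psi_k) \le k$ follows from the Courant-type nodal count on quantum graphs (see \cite{Ber08}), which in general reads $z(\psi_k) \le k + \beta$ with $\beta$ the first Betti number; on a tree $\beta = 0$ so $z(\psi_k) \le k$. For the matching lower bound $z(\psi_k) \ge k$, I would either invoke the tree nodal count result from \cite{Sch06} directly, or reprove it via a parameter-deformation (Sturm oscillation) argument: fix any leaf and let $\lambda > 0$ vary continuously, tracking the family of solutions $u_\lambda$ of $-u'' = \lambda u$ on $\G$ with standard vertex conditions everywhere except at the fixed leaf, where a normalising boundary value is imposed. The tree structure together with genericity is used to show that the map $\lambda \mapsto \phi(u_\lambda)$ is monotone non-decreasing and jumps by exactly one as $\lambda$ crosses each simple Laplacian eigenvalue with generic eigenfunction (multiple jumps are excluded by the absence of cycles, which would otherwise allow edge-vanishing eigenfunctions). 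Starting from the constant function $\psi_1$ with $\phi(\psi_1) = 0$, this gives $\phi(\psi_k) = k - 1$, hence $z(\psi_k) = k$ by the first identity.

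The main obstacle is the lower bound $z(\psi_k) \ge k$: this is the step in which the tree assumption is used essentially (on graphs with cycles, unique continuation fails and strict inequality occurs, cf.\ Example~\ref{ex:tadpole}) and in which genericity plays its role (without simplicity and non-vanishing at vertices, nodal counts can drop, as in Example~\ref{ex:star}). All other ingredients, including the combinatorial counting in the first identity and Courant's upper bound, are robust and do not require genericity beyond Morse-ness.
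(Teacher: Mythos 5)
Your proposal is correct, but it is worth noting that the paper's own ``proof'' is a single line: the statement is declared to be a special case of \cite[Theorem~2.6]{Ber08}. What you have written is essentially a reconstruction of the content of that cited theorem in the tree case, organised into two clean pieces: the purely topological identity $z(\psi_k)=\phi(\psi_k)+1$ (valid because genericity forces all zeros into edge interiors of a cycle-free graph, where each removal splits exactly one component into two), and the count $\phi(\psi_k)=k-1$ obtained by squeezing a Courant-type upper bound against a tree-specific lower bound. This decomposition is exactly what underlies the citation, and making the $z=\phi+1$ bookkeeping explicit is a useful addition the paper leaves implicit. Two small remarks. First, the Courant-type bound on metric graphs is usually stated as $z(\psi_k)\le k$ for the nodal-domain count, with the Betti number entering the \emph{lower} bound $z(\psi_k)\ge k-\beta$ and the upper bound $\phi(\psi_k)\le k-1+\beta$ for the nodal-point count; your ``$z(\psi_k)\le k+\beta$'' is a weaker hybrid, though it is harmless here since $\beta=0$ on a tree. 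Second, your Sturm-deformation sketch of the lower bound (monotonicity of $\lambda\mapsto\phi(u_\lambda)$ with unit jumps at simple eigenvalues) is only a sketch and is precisely the hard content of \cite{Sch06}; the safe and complete route is the citation to \cite{Sch06} or \cite{Ber08}, which is in effect what the paper does. With that citation in place your argument closes with no gap.
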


\begin{proof}
This is a special case of~\cite[Theorem~2.6]{Ber08}.
\end{proof}

We intend to explore the relation between nodal properties and the position on the spectrum. To do so, we generalize an idea of Courant (see~\cite{Cou23}). We start by proving an auxiliary result.

\begin{lemma}\label{lem:courantsargument}
Suppose $\mathcal H_1, \ldots, \mathcal H_k$ are a selection of nodal components of the eigenfunction $\psi$ associated with $\mu_k$. Then for any subgraph $\mathcal H\subset \mathcal G$ containing $\mathcal H_1, \ldots, \mathcal H_k$ we have
\begin{equation}
\lambda_k(\mathcal H) = \mu_k(\G).
\end{equation}
\end{lemma}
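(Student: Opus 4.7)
The plan is to prove the two inequalities $\lambda_n(\mathcal H) \le \mu_n(\G)$ and $\lambda_n(\mathcal H) \ge \mu_n(\G)$ separately, each through the min-max characterization recalled in \autoref{sec:preliminaries}.

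For the upper bound, I would adapt Courant's classical test-function argument. Let $\psi_i$ denote the restriction $\psi|_{\mathcal H_i}$ extended by zero on $\mathcal H \setminus \mathcal H_i$, for $i=1,\ldots,n$. Because the closed nodal components $\mathcal H_i$ are pairwise disjoint, the $\psi_i$ have disjoint supports and are therefore linearly independent. The crucial step is verifying that each $\psi_i$ lies in $H^1_0(\mathcal H;\partial \mathcal H)$: continuity across $\partial \mathcal H_i$ is automatic since $\partial \mathcal H_i \subset N(\psi)$, so $\psi_i \in H^1(\mathcal H)$; moreover any point $p \in \partial \mathcal H \cap \mathcal H_i$ is a limit point of $\G \setminus \mathcal H$, and since $\mathcal H_i \subset \mathcal H$, also a limit point of $\G \setminus \mathcal H_i$, so $p \in \partial \mathcal H_i \subset N(\psi)$ and consequently $\psi_i(p)=0$. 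On each $\mathcal H_i$ the eigenfunction $\psi$ satisfies $-\psi'' = \mu_n \psi$ with Dirichlet condition on $\partial \mathcal H_i$, hence integration by parts yields $\int_{\mathcal H}|\psi_i'|^2\dx = \mu_n\int_{\mathcal H}|\psi_i|^2\dx$. Disjoint supports imply that every function in $\mathrm{span}\{\psi_1,\ldots,\psi_n\}$ has Rayleigh quotient exactly $\mu_n(\G)$, so min-max gives $\lambda_n(\mathcal H) \le \mu_n(\G)$.

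For the reverse inequality, I use that extension by zero realises a linear map $H^1_0(\mathcal H;\partial \mathcal H) \hookrightarrow H^1(\G)$ preserving both the numerator and denominator of the Rayleigh quotient. The extension is genuinely in $H^1(\G)$ because the function already vanishes on $\partial \mathcal H$, making it continuous across $\partial \mathcal H$, and its weak derivative on $\G \setminus \mathcal H$ is zero. Therefore every $n$-dimensional test space for $\lambda_n(\mathcal H)$ yields an $n$-dimensional test space for $\mu_n(\G)$ of equal norm and form, and min-max gives $\mu_n(\G) \le \lambda_n(\mathcal H)$.

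The main obstacle, as is typical for Courant-type arguments on graphs, is the careful topological verification that $\partial \mathcal H \cap \mathcal H_i \subset \partial \mathcal H_i \subset N(\psi)$; this is precisely what ensures that the test functions $\psi_i$ satisfy the Dirichlet condition on $\partial \mathcal H$ without any genericity assumption on $\psi$, which matches the emphasis of the paper on avoiding such assumptions. Once this is secured, the remainder is a standard two-sided min-max comparison.
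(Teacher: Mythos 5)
Your proposal is correct and follows essentially the same route as the paper: the upper bound $\lambda_n(\mathcal H)\le\mu_n(\G)$ via the $n$-dimensional test space spanned by the zero-extended restrictions $\psi|_{\mathcal H_i}$ (whose Rayleigh quotients equal $\mu_n$ by integration by parts), and the lower bound via Dirichlet domain monotonicity, i.e.\ the zero-extension embedding $H^1_0(\mathcal H;\partial\mathcal H)\hookrightarrow H^1(\G)$. Your explicit verification that $\partial\mathcal H\cap\mathcal H_i\subset\partial\mathcal H_i\subset N(\psi)$ is a welcome addition the paper leaves implicit; the only quibble is that the closed nodal components need not be literally pairwise disjoint (adjacent ones share nodal points), but since the overlap has measure zero and $\psi$ vanishes there, the orthogonality and Rayleigh-quotient computations go through unchanged.
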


\begin{proof}
For simplicity we denote $\mu_k(\G)$ by $\mu_k$.
Consider the restriction $w_i=\psi|_{\HGraph_i}$ to the nodal domain $\HGraph_i$, extended by $0$ to the rest of the graph. Then, by integration by parts,
\begin{equation}\label{eq:importanteqforcourant}
\langle w_i', w_i' \rangle =\int_{\HGraph_i} |w_i'|^2=\mu_k\int_{\HGraph_i} |w_i|^2=\mu_k \langle w_i, w_i \rangle\qquad \text{ for } i=1,\ldots, k.
\end{equation}

Now consider the functions $w_1,\ldots,w_k$. Then
\begin{equation}
\left \langle \sum_{j=1}^{k} \alpha_j w_j' , w_i'\right \rangle = \mu_k \left \langle \sum_{j=1}^{k} \alpha_j w_j, w_i\right \rangle
\end{equation}
for every $i=1,\ldots,k$ and all $\alpha_1,\ldots,\alpha_k \in \R$, and $\operatorname{span}\{w_1, \ldots, w_k\}$ is a $k$-dimensional subspace of $H^1(\G)$. We infer that
\begin{equation}
\mu_{k} = \max_{f \in \operatorname{span}\{w_1, \ldots, w_k\}} \frac{\int_{\G} |f'|^2\, \mathrm dx}{\int_{\mathcal G} |f|^2\, \mathrm dx}
\ge \min_{f_1, \ldots, f_k \in H^1_0(\HGraph)} \max_{f \in \operatorname{span}\{f_1, \ldots, f_k\}} \frac{\int_{\G} |f'|^2\, \mathrm dx}{\int_{\mathcal G} |f|^2\, \mathrm dx}
\ge \lambda_k(\HGraph)
\end{equation}
with $f_1, \ldots, f_k \in H^1(\G)$ taken linearly independent.

Then, by domain monotonicity, we have
\begin{equation}\label{eq:courantargument}
\mu_k \ge \lambda_k(\HGraph) \ge \lambda_k(\G) = \mu_k
\end{equation}
and hence $\mu_k=\lambda_k(\HGraph)$, as claimed.
\end{proof}

In the previous lemma we linked the spectrum of the nodal domains to the spectrum of the graph. We now establish our first main result, stating the connection between the number of nodal domains and the spectral position of the associated eigenvalue.

\begin{proof}[Proof of Theorem \ref{Courant-type}]
\txtb{
The argument follows Courant’s original strategy: we assume that an eigenfunction $\psi$ associated with $\mu_k(\G)$ has more than $k-1$ nodes, and then derive a contradiction from the min–max principle. On a tree graph, having at least $k$ nodal points implies the existence of at least $k+1$ nodal domains. Using the acyclic structure of $\G$, we can cut at suitable nodal points in such a way that $k$ of these nodal domains lie in a single connected subgraph $\mathcal H\subset\G$, while the remaining one lies outside. On $\mathcal H$, the restrictions of $\psi$ to these $k$ nodal domains span a $k$–dimensional space on which the Rayleigh quotient is identically $\mu_k(\G)$. The min–max principle and domain monotonicity then force the $k$‑th Dirichlet eigenvalue on $\mathcal H$ to equal $\mu_k(\G)$. Finally, by introducing further cuts inside $\mathcal H$ we can construct arbitrarily many linearly independent eigenfunctions with eigenvalue $\mu_k(\G)$, contradicting the finiteness of its multiplicity. This shows that the assumption of at least $k$ nodal points was false, and hence $\psi$ has at most $k-1$ nodal points
}

We now turn to the detailed argument. Let $\mu_k$ be the $k$th eigenvalue of a tree graph $\G$ associated with $\psi$. For a contradiction, suppose there are at least $k$ nodal points and consider its nodal domains
\[
\HGraph_1,\, \HGraph_2,\ldots,\HGraph_{k+1},\ldots.
\]

Without loss of generality, we may assume $k\ge 2$; otherwise, there is nothing to show. Since $\G$ is a tree graph, there exists a nodal point such that, when a cut is introduced at this point, all but one of the resulting connected components (without loss of generality, denote the missing connected component by $\mathcal H_{k+1}$) contain all other nodal points of $\psi$.

To see this, begin by selecting any nodal point $v_1$ and perform a \emph{maximal} cut at that point, obtained by removing edges incident to that node up to the point where no further nontrivial cuts can be made. Then either one of the connected components contains all nodal points, or otherwise we may choose the nodal point furthest from $v_1$ on any connected component that contains other nodal points, denote it by $v_2$, and perform a maximal cut there instead. This process will always result in a connected component containing all remaining nodal points (see Figure~\ref{nodal examples} for an illustration). Due to the Kirchhoff condition, $\psi$ must change sign in a neighbourhood of $v_2$. Hence, another connected component of the resulting cut graph is a nodal domain of $\psi$.

\begin{figure}[H]
\centering
\begin{tikzpicture}[scale=0.9]
\draw[thick] (-2,0) -- (-3.75,0);
\draw[thick] (-5,1) -- (-3.75,0);
\draw[thick] (-5,-1) -- (-3.75,0);
\draw[thick] (-2,0) -- (-1,1);
\draw[thick] (-2,0) -- (-1,-1);
\draw[thick] (-4.375,-0.5) -- (-3.75,-1);
\draw[fill] (-2,0) circle (1.pt);
\draw[fill] (-3.75,0) circle (1.pt);
\draw[fill] (-1,-1) circle (1.pt);
\draw[fill] (-1,1) circle (1.pt);
\draw[fill] (-5,1) circle (1.pt);
\draw[fill] (-5,-1) circle (1.pt);
\draw[fill] (-3.75,-1) circle (1.pt);
\draw[fill,blue] (-3.75,0) circle (1.5pt);
\draw[fill,blue] (-2,0) circle (1.5pt);
\draw[fill,blue] (-1.5,-0.5) circle (1.5pt);
\draw[fill,blue] (-4.375,-0.5) circle (1.5pt);
\node at (-3.7,0) [anchor= north] {$v_1$};
\node at (-1.55,-0.55) [anchor= north] {$v_2$};
\node at (-4.375,-0.55) [anchor= north] {$v_3$};

\draw[thick] (-2,-2.5) -- (-3.75,-2.5);
\draw[thick] (-5,-1.5) -- (-3.75,-2.5);
\draw[thick] (-5,-3.7) -- (-4.7,-3.3);
\draw[thick] (-2,-2.5) -- (-1,-1.5);
\draw[thick] (-2,-2.5) -- (-1,-3.5);
\draw[thick] (-3.75,-2.5) -- (-4.375,-3);
\draw[thick] (-4.3,-3.3) -- (-4,-3.7);
\draw[fill] (-2,-2.5) circle (1.pt);
\draw[fill] (-3.75,-2.5) circle (1.pt);
\draw[fill] (-1,-3.5) circle (1.pt);
\draw[fill] (-1,-1.5) circle (1.pt);
\draw[fill] (-5,-1.5) circle (1.pt);
\draw[fill] (-4.375,-3)circle (1pt);
\draw[fill](-5,-3.7)circle (1pt);
\draw[fill] (-4.7,-3.3)circle (1pt);
\draw[fill] (-4,-3.7) circle (1pt);
\draw[fill] (-4.3,-3.3) circle (1pt);
\draw[fill,blue] (-3.75,-2.5) circle (1.5pt);
\draw[fill,blue] (-2,-2.5) circle (1.5pt);
\draw[fill,blue] (-1.5,-3) circle (1.5pt);
\node at (-3.7,-2.5) [anchor= north] {$v_1$};
\node at (-1.55,-3.05) [anchor= north] {$v_2$};

\draw[thick] (-7,-2.5) -- (-8.4,-2.5);
\draw[thick] (-10,-1.5) -- (-9.1,-2.3);
\draw[thick] (-10,-3.5) -- (-9.1,-2.7);

\draw[thick] (-7,-2.5) -- (-6,-1.5);
\draw[thick] (-7,-2.5) -- (-6,-3.5);
\draw[thick] (-9.375,-3) -- (-8.75,-3.5);
\draw[fill] (-7,-2.5) circle (1.pt);
\draw[fill] (-6,-3.5) circle (1.pt);
\draw[fill] (-6,-1.5) circle (1.pt);
\draw[fill] (-10,-1.5) circle (1.pt);
\draw[fill] (-10,-3.5) circle (1.pt);
\draw[fill] (-8.75,-3.5) circle (1.pt);

\draw[fill] (-8.4,-2.5) circle (1.pt);
\draw[fill] (-9.1,-2.3) circle (1.pt);
\draw[fill] (-9.1,-2.7) circle (1.pt);

\draw[fill,blue] (-7,-2.5) circle (1.5pt);
\draw[fill,blue] (-6.5,-3) circle (1.5pt);
\draw[fill,blue] (-9.4,-2.975) circle (1.5pt);

\node at (-6.55,-3.05) [anchor= north] {$v_2$};
\node at (-9.375,-3.05) [anchor= north] {$v_3$};

\draw[thick] (3,-2.5) -- (1.25,-2.5);
\draw[thick] (0,-1.5) -- (1.25,-2.5);
\draw[thick] (0,-3.5) -- (1.25,-2.5);
\draw[thick] (3,-2.5) -- (4,-1.5);
\draw[thick] (3,-2.5) -- (3.2,-2.7);
\draw[thick] (3.5,-3) -- (4,-3.5);

\draw[thick] (0.625,-3) -- (1.25,-3.5);

\draw[fill] (3,-2.5) circle (1.pt);
\draw[fill] (1.25,-2.5) circle (1.pt);
\draw[fill] (4,-3.5) circle (1.pt);
\draw[fill] (4,-1.5) circle (1.pt);
\draw[fill] (0,-1.5) circle (1.pt);
\draw[fill] (0,-3.5) circle (1.pt);
\draw[fill] (1.25,-3.5) circle (1.pt);
\draw[fill] (3.2,-2.7) circle (1.pt);

\draw[fill,blue] (1.25,-2.5) circle (1.5pt);
\draw[fill,blue] (3,-2.5) circle (1.5pt);
\draw[fill] (3.5,-3) circle (1.pt);
\draw[fill,blue] (0.625,-3) circle (1.5pt);

\node at (1.3,-2.5) [anchor= north] {$v_1$};

\node at (0.625,-3.05) [anchor= north] {$v_3$};

\end{tikzpicture}
\caption{A tree graph $\G$ (top) with the nodal points of $\psi$ shown in blue. On the left, cutting the graph at $v_1$ does not yield the desired configuration, since the remaining nodal points do not lie in a single connected component. In contrast, the other two cuts shown—through $v_3$ (center) and $v_2$ (right)—each yield a connected component that contains all remaining nodal points.}
\label{nodal examples}
\end{figure}

Consider the graph obtained by removing $\mathcal H_{k+1}$; it contains $k$ nodal domains (without loss of generality $\mathcal H_1, \ldots, \mathcal H_k$) such that each remaining nodal point is adjacent to two of these nodal domains. Let $\G'$ be a connected graph for which $\mathcal H_1,\ldots, \mathcal H_k$ form an exhaustive partition. Then, by construction, the eigenfunction is fully supported on $\G'$. By Lemma~\ref{lem:courantsargument} we have $\mu_k= \lambda_k(\G')$, and we can now construct subgraphs
\begin{equation}
\G^{(m)} = \left \{x\in \G: \dist(x, \G')\le \frac{\ell_{\min}}{m+1}\right \},
\end{equation}
for $m \in \N$, where $\ell_{\min}$ is the smallest edge length. Further, by the min–max principle,
\begin{equation}
\mu_k = \lambda_k(\G^{(m)}) \ge \lambda_k(\G) = \mu_k
\end{equation}
and we infer $\lambda_k(\G^{(m)})= \mu_k$.

If $\psi_j$ were to vanish on all edges outside of $\mathcal G'$ for some
$j \in \mathbb{N}$, then its derivatives would vanish there as well. In that
case the Kirchhoff condition at the nodal point adjacent to $\mathcal H_{k+1}$
could not be satisfied unless $\psi_j \equiv 0$ on all edges adjacent to this
nodal point. Consequently, we may argue inductively that the eigenfunction
either vanishes on each of the sets
$\mathcal H_1, \ldots, \mathcal H_{k+1}$, or else possesses an interior nodal
point on one of the sets $\mathcal H_1, \ldots, \mathcal H_k$.

We now show that the latter alternative is impossible. Without loss of
generality, suppose that $\mathcal H_1$ is a domain adjacent to a single
nodal point and that $\psi_j$ has an interior nodal point in $\mathcal H_1$.
Then the restriction of $\psi_j$ to $\mathcal H_1$ is an eigenfunction for
$\lambda_1(\mathcal H_1)$, which is simple; hence its eigenfunction cannot
have any interior nodal point—a contradiction.

On the other hand, if the eigenfunction has no nodal point at the boundary
of $\mathcal H_1$, then extending the first eigenfunction of $\mathcal H_1$ by
zero to the connected component obtained by the maximal cut at the nodal
point nearest to $\mathcal H_1$ yields an eigenfunction. This is again a
contradiction, since the first eigenvalue of a connected graph is simple and
its eigenfunction can only vanish at the Dirichlet vertex. Successively, the
same argument shows that the eigenfunction cannot have an interior nodal point
on any of the sets $\mathcal H_1, \ldots, \mathcal H_k$.

Hence, $\psi_j$ must be supported on at least one edge outside $\mathcal G'$,
and in particular the functions $\psi_j$ are linearly independent.

Then, as before, we have
\begin{equation}
\left \langle \sum_{j=1}^m \alpha_j \psi_j', \psi_i'\right \rangle = \mu_k\left \langle \sum_{j=1}^m \alpha_j \psi_j, \psi_i \right \rangle
\end{equation}
\txtb{for all } $i=1,\ldots, m$ and $\operatorname{span}\{\psi_1, \ldots, \psi_m\}$ is an $m$-dimensional subspace of $H^1(\G)$. Thus
\begin{equation}
\mu_k = \max_{f\in \operatorname{span}\{\psi_1, \ldots, \psi_m\}} \frac{\int_{\G} |f'|^2\, \mathrm dx}{\int_{\mathcal G} |f|^2\, \mathrm dx} \ge \min_{f_1, \ldots, f_m \in H^1(\G)} \max_{f \in \operatorname{span}\{f_1, \ldots, f_m\}} \frac{\int_{\G} |f'|^2\, \mathrm dx}{\int_{\mathcal G} |f|^2\, \mathrm dx}\ge \mu_m.
\end{equation}
Since $m$ was arbitrary, it follows that $\mu_{m} =\mu_k$ for all $m\ge k$, which contradicts the finiteness of the multiplicities of eigenvalues. By contradiction, we conclude that an eigenfunction associated with $\mu_k$ has at most $k-1$ nodal points.
\end{proof}

The following theorem is an application of the previous one in the special case $k=2$.

\begin{proposition}\label{one nodal point implies second eigen}
Consider a tree graph $\G$ and a Morse eigenfunction $\psi$ with exactly one nodal point. Then $\psi$ is an eigenfunction for $\mu_2(\G)$.
\end{proposition}

\begin{proof}
Denote by $v$ the unique nodal point of the Morse eigenfunction $\psi$ and by $\mu_\psi$ its associated eigenvalue.

Let $\varphi$ be an eigenfunction for $\mu_2(\G)$. By Theorem~\ref{Courant-type}, the function $\varphi$ has at most one nodal point. Since $\varphi$ changes sign, it must have exactly one nodal point by~\cite[Theorem 3]{K19}. Denote this nodal point by $u \in \G$.

We now show that the nodal points $u$ of $\varphi$ and $v$ of $\psi$ must coincide.
Assume, for a contradiction, that $u\neq v$. Since the nodal domains of $\psi$ exhaust the whole graph, one of its nodal domains $\hat \HGraph_1$ does not contain $u$ and therefore strictly contains one of the nodal domains $\hat \HGraph_2$ of $\varphi$, since only one of the nodal domains of $\varphi$ can contain $v$. The restrictions of the respective eigenfunctions are positive eigenfunctions for the corresponding Dirichlet Laplacian on $\hat \HGraph_1$ and $\hat \HGraph_2$ with the respective nodal points as Dirichlet points. They are the first eigenfunctions on the respective subgraphs, and $\hat \HGraph_2 \subsetneq \hat \HGraph_1$. By domain monotonicity of $\lambda_1$ we obtain
\[
\mu_\psi =\lambda_1(\hat \HGraph_1) < \lambda_1(\hat \HGraph_2)=\mu_2(\G) \le \mu_\psi,
\]
a contradiction.

This contradiction shows that the nodal points coincide. Therefore, the restriction of $\psi$ to each nodal domain of $\varphi$ is a positive eigenfunction and hence agrees with $\varphi$ up to a multiplicative constant. We conclude that $\mu_{\psi}= \mu_2(\G)$.
\end{proof}

\subsection{Results on Neumann partitions and spectral minimal partitions}
The following result shows how a $k$-equipartition can induce an eigenfunction that generates this partition as its Neumann domains. Here a $k$-equipartition refers to a division of the graph into $k$ connected subgraphs such that the first nontrivial eigenvalue $\mu_2$ coincides on each of the subgraphs. In this subsection we prove our remaining main results.

We start with an auxiliary statement:

\begin{proposition}\label{equipartition implies eigenfunction}
Let $\G$ be a tree graph, and let $(\HGraph_1, \ldots, \HGraph_k)$ be a partition of $\G$ such that each subgraph $\HGraph_i$ admits an eigenfunction $\psi_i$ corresponding to the same eigenvalue $\mu$ of the standard Laplacian. If each eigenfunction $\psi_i$ is generic, then there exists an eigenfunction $\psi$ of the standard Laplacian on $\G$ whose Neumann domains coincide with the partition $(\HGraph_1, \ldots, \HGraph_k)$.
\end{proposition}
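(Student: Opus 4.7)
The plan is to construct $\psi$ by gluing suitably rescaled copies of the individual $\psi_i$ across the cut vertices separating the clusters, and then to verify that the glued function is a standard-Laplacian eigenfunction on $\G$ whose Neumann set consists precisely of these cut vertices. Because $\G$ is a tree and each $\HGraph_i$ is connected, the auxiliary graph whose nodes are the clusters and whose edges correspond to shared cut vertices is itself a tree: any cycle there would pull back to a cycle in $\G$. This observation is the reason the inductive definition of the scalings is consistent.

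Concretely, I would fix a root cluster $\HGraph_{i_0}$, put $\alpha_{i_0} := 1$, and propagate outward through the auxiliary tree. Whenever a not-yet-processed cluster $\HGraph_j$ is adjacent to an already-scaled cluster $\HGraph_i$ across a cut vertex $v$, I would set
\begin{equation*}
\alpha_j := \alpha_i \, \frac{\psi_i(v)}{\psi_j(v)},
\end{equation*}
which is well defined thanks to the genericity hypothesis that $\psi_j(v)\neq 0$. Define $\psi$ on $\G$ by $\psi|_{\HGraph_i} := \alpha_i \psi_i$; continuity at each cut vertex holds by construction, and the tree structure of the auxiliary graph ensures that the value assigned to each cut vertex is independent of the traversal path.

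I would then verify that $\psi$ is an eigenfunction of the standard Laplacian on $\G$ with eigenvalue $\mu$. The edge equation $-\psi'' = \mu \psi$ and the Kirchhoff condition at every vertex interior to some cluster are both inherited from the $\psi_i$. At each cut vertex $v$, the corresponding vertex of $\HGraph_i$ has degree one in $\HGraph_i$, so the standard condition there already forces $\psi_i$ to have vanishing normal derivative on the incident edge; summing these zero contributions across all clusters meeting at $v$ yields the Kirchhoff condition for $\psi$ on $\G$. To identify the Neumann set, every cut vertex belongs to $\mathcal{N}(\psi)$, since all its incident normal derivatives vanish and $\psi'' = -\mu \psi$ is nonzero there (as $\psi$ does not vanish at the vertex by genericity), which makes the point a genuine extremum of $\psi$. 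Conversely, genericity excludes extrema at interior vertices of each $\HGraph_i$, and the sign structure of a generic eigenfunction on the tree $\HGraph_i$ (via \autoref{nodal domains of trees eigenfuntion}) excludes interior edge extrema. Hence $\mathcal{N}(\psi)$ equals the set of cut vertices, and the Neumann domains of $\psi$ coincide with $(\HGraph_1, \ldots, \HGraph_k)$.

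The main obstacle I foresee is this last point, namely ensuring that no $\psi_i$ admits an extremum in the interior of an edge of $\HGraph_i$. Morse regularity alone does not rule such extrema out, so the argument has to exploit the tree structure: within each nodal subcluster of $\HGraph_i$, the restriction of $\psi_i$ is of principal Dirichlet--Neumann type and is therefore monotone along every edge. Once this monotonicity is in place, all extrema of $\psi$ are located at the cut vertices, and the partition into Neumann domains is exactly $(\HGraph_1, \ldots, \HGraph_k)$.
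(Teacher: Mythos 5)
Your construction --- rescaling the $\psi_i$ cluster by cluster and gluing at the cut vertices --- is exactly the one in the paper, and the gluing part of your argument (consistency of the scalings, continuity, the edge equation, Kirchhoff at interior vertices and at cut vertices) is correct; you are in fact more careful here than the paper. One small imprecision: if three or more clusters meet at a single cut vertex, your auxiliary graph of clusters does contain a cycle, but all the constraints around that cycle pin the scalings to the same value at that one vertex, so consistency still holds.

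The genuine problem is the last step, which you rightly single out as the main obstacle: ruling out extrema of $\psi_i$ in the interior of edges. Your proposed fix --- that on each nodal subcluster the restriction of $\psi_i$ is a Dirichlet--Neumann ground state and therefore monotone along every edge --- is false in general. A nodal domain of $\psi_i$ may meet nodal (Dirichlet) points on two sides of an edge, and then its ground state has an interior extremum on that edge. Concretely, take $\HGraph_i=[0,1]$ and $\psi_i(x)=\cos(2\pi x)$, the eigenfunction of $\mu_3([0,1])=4\pi^2$: it is generic in the sense of \autoref{def:genericmorse}, yet its middle nodal domain $[1/4,3/4]$ is pinned at both ends and $\psi_i$ has an extremum at $x=1/2$, which becomes a Neumann point of the glued $\psi$ and splits $\HGraph_i$ into two Neumann domains. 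So for an arbitrary common eigenvalue $\mu$ the conclusion that the Neumann domains coincide with $(\HGraph_1,\ldots,\HGraph_k)$ can fail; the paper's own proof silently skips this point as well. Both the statement and your argument become correct under the additional hypothesis that each $\psi_i$ has exactly one nodal point (equivalently, in the intended applications, $\mu=\mu_2(\HGraph_i)$): then every nodal domain of $\psi_i$ meets exactly one Dirichlet point, and an interior critical point $x_0$ on an edge would, after cutting at $x_0$, leave a component containing no Dirichlet point on which $\psi_i$ is a positive eigenfunction of the standard Laplacian with eigenvalue $\mu>0$, which is impossible. You should add this hypothesis and run that cutting argument instead of appealing to monotonicity of general Dirichlet--Neumann ground states.
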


\begin{proof}
Consider a partition $(\HGraph_1, \ldots, \HGraph_k)$ of the tree graph $\G$ such that each subgraph $\HGraph_i$ admits a generic eigenfunction $\psi_i$ corresponding to the same eigenvalue $\mu$.

Since $\G$ is a tree, any vertex shared between two subgraphs (a cut vertex) lies on the boundary of each subgraph it connects. In each $\HGraph_i$, such a vertex is treated as a degree-one vertex. Because the eigenfunctions $\psi_i$ are Morse, their derivatives vanish at the boundary vertices, satisfying the Neumann condition
\begin{gather*}
\partial_e \psi_i(v) = 0 \qquad \text{ for all } e\in \partial \HGraph_i.
\end{gather*}
Now fix a subgraph $\HGraph_i$ and its eigenfunction $\psi_i$. For any adjacent subgraph $\HGraph_j$ sharing a boundary vertex $v$ with $\HGraph_i$, scale $\psi_j$ by a constant $C_{i,j}\neq 0$ so that
\begin{gather*}
\psi_i (v) = C_{i,j} \psi_j(v).
\end{gather*}
This ensures continuity of the function across $v$. Since both functions have zero derivative at $v$, the Kirchhoff condition is satisfied at $v$ when the subgraphs are joined.

By repeating this process inductively—scaling and gluing eigenfunctions across shared boundary vertices—we construct a global function $\psi$ on $\G$ that is continuous and satisfies the Kirchhoff condition at every vertex. Thus, $\psi$ is an eigenfunction of the standard Laplacian on $\G$ to the eigenvalue $\mu$, and its Neumann domains coincide with the original partition $(\HGraph_1, \ldots, \HGraph_k)$.
\end{proof}

If we consider equipartitions with respect to the first nontrivial eigenvalue $\mu_2$, we can further identify the resulting number of nodal points of such a constructed eigenfunction.

\begin{corollary} \label{cor:equiimpleign2}
Let $\G$ be a tree graph, and consider an equipartition $(\HGraph_1,...,\HGraph_k)$ of $\G$ such that each subgraph $\HGraph_i$ has the same first nontrivial eigenvalue $\mu_2$ of the standard Laplacian, i.e.
\[
\mu_{2}(\HGraph_1)=\cdots=\mu_{2}(\HGraph_k),
\]
and assume that each associated eigenfunction is generic. Then there exists an eigenfunction of the standard Laplacian on $\G$ whose Neumann domains coincide with the partition $(\HGraph_1,....,\HGraph_k)$ and which has $k$ nodal points.
\end{corollary}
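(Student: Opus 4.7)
The plan is to apply Proposition \ref{equipartition implies eigenfunction} to obtain the eigenfunction $\psi$ of the standard Laplacian on $\G$ whose Neumann domains coincide with the clusters $\HGraph_1, \ldots, \HGraph_k$, and then to count its nodal points using Lemma \ref{nodal domains of trees eigenfuntion} on each cluster together with the nonvanishing at cut vertices guaranteed by genericity.

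First I invoke Proposition \ref{equipartition implies eigenfunction} with the common eigenvalue $\mu = \mu_2(\HGraph_1) = \cdots = \mu_2(\HGraph_k)$. This yields a continuous, Kirchhoff-respecting function $\psi$ on $\G$, obtained by multiplying each generic eigenfunction $\psi_i$ by a nonzero constant so that the values agree at the shared cut vertices, with Neumann domains precisely $(\HGraph_1, \ldots, \HGraph_k)$. Since $\G$ is a tree, each cluster $\HGraph_i$ is itself a tree, and by hypothesis $\psi_i$ is a generic eigenfunction for $\mu_2(\HGraph_i)$; Lemma \ref{nodal domains of trees eigenfuntion} therefore yields $\phi(\psi_i) = 1$, and by genericity this unique nodal point lies in the open interior of some edge of $\HGraph_i$ rather than at a vertex. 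These interior nodal points are unaffected by the scaling used in the gluing, so they contribute exactly $k$ nodal points to $\psi$ altogether.

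It then remains to show that no cut vertex becomes a new nodal point of $\psi$. At a vertex $v$ shared between two clusters $\HGraph_i$ and $\HGraph_j$, the genericity of $\psi_i$ forces $\psi_i(v) \neq 0$; since the scaling constant $C_{i,j}$ from the construction of Proposition \ref{equipartition implies eigenfunction} is nonzero, the glued value $\psi(v) = \psi_i(v)$ is also nonzero. Hence $v$ is not a nodal point of $\psi$, and we conclude $\phi(\psi) = k$. The only genuine subtlety is this bookkeeping at cut vertices, neatly dispatched by the nonvanishing-at-vertices part of the genericity hypothesis; the remaining parts of the hypothesis (simple eigenvalue, no interior extremal vertices) are what make Proposition \ref{equipartition implies eigenfunction} and Lemma \ref{nodal domains of trees eigenfuntion} applicable in the first place.
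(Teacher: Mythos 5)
Your proposal is correct and follows essentially the same route as the paper: invoke Proposition~\ref{equipartition implies eigenfunction} to build the glued eigenfunction, then use genericity to see that each cluster contributes exactly one nodal point. The paper's own proof is a two-line version of this; your extra bookkeeping (citing Lemma~\ref{nodal domains of trees eigenfuntion} for $\phi(\psi_i)=1$ and checking that the nonvanishing of the $\psi_i$ at cut vertices prevents new nodes from appearing under scaling) only makes explicit what the paper leaves implicit.
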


\begin{proof}
By assumption there exist eigenfunctions $\psi_1,\ldots, \psi_k$ on $\HGraph_1, \ldots, \HGraph_k$, each corresponding to the eigenvalue $\mu_2$, and they are generic. Then, by Proposition~\ref{equipartition implies eigenfunction}, there exists a global eigenfunction $\psi$ on $\G$ whose Neumann domains coincide with $(\HGraph_1,\ldots,\HGraph_k)$. By genericity, $\psi$ has exactly one node in each of its Neumann domains, so it has $k$ nodal points in total.
\end{proof}

Using the construction above, we can prove our second main result.

\begin{proof}[Proof of Theorem \ref{sufficient and necessary nomain domains equipartition}]
First, observe that Neumann domains of $\psi_k$ do not contain interior Neumann points by definition. Since $\mathcal{G}$ is a tree graph, Theorem~\ref{Courant-type} implies that the number of Neumann domains, each containing at least one nodal point, is not greater than $\phi(\psi_k) \leq k-1$. Suppose now that $\psi_k$ has $k-1$ Neumann domains, $\{\mathcal{H}_1, \dots, \mathcal{H}_{k-1}\}$. Then, for each $i = 1, \dots, k-1$, the restriction $\psi_k|_{\mathcal{H}_i}$ must have at least one nodal point, so $\phi(\psi_k|_{\mathcal{H}_i}) \geq 1$. Combined with the global bound, this implies
\[
\phi(\psi_k|_{\mathcal{H}_i}) = 1 \quad \text{for all } i = 1, \dots, k-1.
\]

By construction, $\mu_k$ is an eigenvalue on each $\mathcal H_i$ (see~\cite[Proposition~8.1]{ABBE-ND-20}).
Since $\psi_k|_{\mathcal{H}_i}$ is Morse by assumption and has exactly one nodal point, Proposition~\ref{one nodal point implies second eigen} implies that $\psi_k|_{\mathcal{H}_i}$ is the eigenfunction corresponding to the second eigenvalue on $\mathcal H_i$ for each $i$.

Therefore, we conclude that
\[
\mu_k(\mathcal{G}) = \mu_2(\mathcal{H}_i) \quad \text{for all } i = 1, \dots, k-1.
\]
\end{proof}

\txtb{
With this property established, we can prove our third main result, which characterizes spectral minimal partitions in terms of Neumann domains of eigenfunctions, and vice versa. 
}
\begin{proof}[Proof of Theorem \ref{main2}]
Assume $\psi_k$ is a generic eigenfunction. \txtb{Let $\mathcal{H}_1, \dots, \mathcal{H}_m$ denote the Neumann domains of $\psi_k$, where $m \le k-1$ by Corollary~\ref{Neumann Courant-type}, and define the partition
\[
\mathcal{P} = (\mathcal{H}_1, \dots, \mathcal{H}_m ).
\]}

Now suppose $m = k-1$ and that the partition $\mathcal P$ realizes the spectral minimal value, i.e.\ $\mathcal{L}^N_{k-1}(\mathcal{G}) = \Lambda^N_{k-1}(\mathcal{P})$. Then, by Theorem~\ref{sufficient and necessary nomain domains equipartition}, it follows that
\[
\mu_k(\G)=\mu_2(\mathcal{H}_1) = \dots = \mu_2(\mathcal{H}_{k-1}),
\]
and $\psi_k \big |_{\mathcal H_i}$ are generic spectral minimizers for $\mathcal P$. \txtb{In particular,
\[
\mathcal L^N_{k-1}(\mathcal{G}) = \mu_k(\G).
\]}

Conversely, suppose there exists a $(k-1)$-spectral minimal equipartition $\mathcal{H}_1, \dots, \mathcal{H}_{k-1}$ with generic spectral minimizers. Then, by Corollary~\ref{cor:equiimpleign2}, there exists an eigenfunction $\psi$ on $\mathcal{G}$ with $k-1$ nodal points, which by assumption is generic. By Lemma~\ref{nodal domains of trees eigenfuntion}, this implies that $\psi$ is the $k$th eigenfunction of~$\G$.
\end{proof}

\begin{corollary}\label{cor:specnodenontree}
Suppose there exists a cut graph $\G'$ such that each of the partition elements of the spectral minimal $(k-1)$-partition has generic spectral minimizers. Then
\[
\mathcal{L}_{k-1}(\G) \le \mu_k(\G') \leq \mu_k(\G).
\]

Suppose additionally that the optimal partition on $\G$ is a partition on $\G'$. Then
\[
\mathcal{L}_{k-1}(\G)=\mu_k(\G').
\]
\end{corollary}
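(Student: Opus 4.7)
The plan is to chain three elementary comparisons, with Theorem~\ref{equality spm and mun} applied to the cut graph $\G'$ supplying the key equality in the middle. The argument is entirely structural, not computational.

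First I would observe that because $\G'$ is a cut graph of $\G$, every $(n-1)$-partition of $\G'$ is automatically an $(n-1)$-partition of $\G$ via Definitions~\ref{def:cut} and~\ref{def:partition}: a cut of $\G'$ composed with the cut producing $\G'$ from $\G$ is still a cut of $\G$. Hence $\mathfrak C_{n-1}(\G') \subseteq \mathfrak C_{n-1}(\G)$, and taking the minimum over a smaller feasible set can only increase the value, giving
$$\mathcal{L}^N_{n-1}(\G) \;\le\; \mathcal{L}^N_{n-1}(\G').$$
Next, the hypothesis that the spectral minimal $(n-1)$-partition on $\G'$ admits generic minimizers is precisely what is needed to apply Theorem~\ref{equality spm and mun} to $\G'$, yielding $\mathcal{L}^N_{n-1}(\G') = \mu_n(\G')$. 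Finally, the inequality $\mu_n(\G') \le \mu_n(\G)$ is the standard surgery principle: introducing a cut relaxes the vertex coupling, so by the min-max principle the Laplacian eigenvalues can only decrease (see \cite[Theorem~3.4]{BKKM19}). Concatenating the three relations yields the advertised chain.

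For the second assertion, suppose that the optimal partition $\mathcal{P}^*$ realizing $\mathcal{L}^N_{n-1}(\G)$ is itself a partition on $\G'$. Then $\mathcal{P}^* \in \mathfrak C_{n-1}(\G')$ is a feasible candidate for $\mathcal{L}^N_{n-1}(\G')$, and so $\mathcal{L}^N_{n-1}(\G') \le \Lambda^N(\mathcal{P}^*) = \mathcal{L}^N_{n-1}(\G)$. Combined with the reverse inequality from the first step, this forces equality throughout, collapsing the chain to $\mathcal{L}^N_{n-1}(\G) = \mathcal{L}^N_{n-1}(\G') = \mu_n(\G')$.

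The one delicate point I would expect is verifying that Theorem~\ref{equality spm and mun} really applies to $\G'$: that theorem is stated for tree graphs whose eigenfunctions are all generic. In practice one chooses $\G'$ to be a tree cut -- this is always permissible without loss of generality by Theorem~\ref{thm:mainresultequiv}, which guarantees that restricting to tree cuts does not change $\mathcal{L}^N_k$ -- and then the genericity assumption on the spectral minimizers, together with the hypothesis on the eigenfunctions, lets the theorem be invoked verbatim. Once that bookkeeping is in place, the corollary is immediate from the three-step chain.
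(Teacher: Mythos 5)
Your proposal is correct and follows essentially the same route as the paper: the chain $\mathcal{L}^N_{n-1}(\G) \le \mathcal{L}^N_{n-1}(\G') = \mu_n(\G') \le \mu_n(\G)$ obtained from the inclusion of partition classes, Theorem~\ref{equality spm and mun} applied to $\G'$, and the surgery/min-max monotonicity of eigenvalues under cutting, with the second claim following by testing $\mathcal{L}^N_{n-1}(\G')$ against the optimal partition of $\G$. Your explicit remark that $\G'$ must be taken as a tree cut with the requisite genericity for Theorem~\ref{equality spm and mun} to apply is a point the paper handles only implicitly by assuming $\G'$ is a tree at the start of its proof.
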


\begin{proof}
Suppose $\G'$ is a tree graph which is a cut graph of $\G$. Then, by Proposition~\ref{thm:mainresultequiv} and Theorem~\ref{main2},
\[
\mathcal{L}_{k-1}(\G) \leq \mathcal{L}_{k-1}(\G') = \mu_k(\G') \leq \mu_k(\G).
\]

The second statement follows from combining Proposition~\ref{thm:mainresultequiv} and Theorem~\ref{main2}. Suppose $\mathcal P$ is an optimal $(k-1)$-partition. Then
\[
\mathcal L_{k-1}(\mathcal G')\le \Lambda(\Partition)=\mathcal{L}_{k-1}(\G)= \mathcal{L}_{k-1}(\G'),
\]
which yields the claim.
\end{proof}

\section{Examples}\label{sec:examples}

We conclude the article with several examples that demonstrate the use and limitations of our results:
\begin{itemize}
\item Example~\ref{ex:path} illustrates a direct application of Theorem~\ref{main2}, demonstrating the relationship between spectral minimal partitions and Neumann partitions. Specifically, the example shows that the spectral minimal partition constructed there corresponds to the Neumann partition of the $(k+1)$st eigenfunction.
\item Example~\ref{ex:tadpole} demonstrates the necessity of restricting our main results, Theorems~\ref{sufficient and necessary nomain domains equipartition} and~\ref{main2}, to tree graphs. In particular, Neumann partitions of eigenfunctions do not always coincide with spectral optimal partitions even if each Neumann domain contains exactly one nodal point. Furthermore, it illustrates that the bounds on the number of nodal points obtained in Theorem~\ref{Courant-type} do not extend to graphs containing cycles.
\item Example~\ref{ex:star} provides a counterexample that underscores the necessity of the genericity assumption in Theorem~\ref{main2}. It shows that without genericity of the relevant eigenfunctions, the minimal $k$-partition energy may not coincide with $\mu_{k+1}$, even if the other conditions of the statement are satisfied. Furthermore, the example addresses Theorem~\ref{Courant-type} by confirming that the upper bound on the number of nodal points remains valid even when the eigenfunctions are not generic.
\end{itemize}

\begin{example}\label{ex:path}

Consider the path graph $\mathcal I=[0,L]$ (see Figure~\ref{pathgraph}) with length $L$. Then
\[
\mu_{k+1}(\mathcal I)=\frac{k^2 \pi^2 }{L^2}.
\]
The corresponding eigenfunction is generic, and by Theorem~\ref{main2} we have
\begin{equation}
\mathcal{L}_k^N(\mathcal I) = \mu_{k+1}(\mathcal I)=\frac{k^2 \pi^2 }{L^2}.
\end{equation}
Moreover, the Neumann partition of the corresponding eigenfunction is the spectral minimal partition.

\end{example}

\begin{figure}[ht]
\centering
\begin{tikzpicture}[scale=0.9]
% Draw edges
\draw[thick] (0.5, 0) -- (0.85, 0);
\draw[thick] (5.15, 0) -- (5.5, 0);
\foreach \x in {1,...,4} {
\draw[thick] (\x+0.15,0) -- (\x+0.85,0);
}

% Draw vertices
\node[draw, circle, fill=black, minimum size=3pt, inner sep=0pt] at (0.5 ,0) {};
\node[draw, circle, fill=black, minimum size=3pt, inner sep=0pt] at (5.5 ,0) {};
\foreach \x in {1,...,5} {
\node[draw, circle, fill=white, minimum size=3pt, inner sep=0pt] at (\x-0.15,0) {};
\node[draw, circle, fill=white, minimum size=3pt, inner sep=0pt] at (\x+0.15,0) {};
}

% Dashed lines at nodal points
\foreach \x in {1,..., 5} {
\draw[dashed, thick] (\x, -0.3) -- (\x, 0.3);
}

\end{tikzpicture}
\hspace{10em}
\begin{tikzpicture}[scale=0.9]
%Draw vertices
\foreach \x in {0,...,5} {
\node[draw, circle, fill=black, minimum size=3pt, inner sep=0pt] at (\x+0.65,0) {};
\node[draw, circle, fill=black, minimum size=3pt, inner sep=0pt] at (\x+1.35,0) {};
}
% Draw edges
\foreach \x in {0,...,5} {
\draw[thick] (\x+0.65,0) -- (\x+1.35,0);
}
% Dashed lines at Neumann points
\foreach \x in {1,..., 5} {
\draw[dashed, thick] (\x + 0.5, -0.3) -- (\x + 0.5, 0.3);
}

\end{tikzpicture}
\caption{Path graph with nodal domains (left) and Neumann domains (right) of the eigenfunction corresponding to $\mu_6$.}\label{pathgraph}
\end{figure}

\begin{figure}[ht]
\centering

\begin{tikzpicture}[scale=1.2]
\draw[thick] (-2,0) -- (-3.75,0);
\draw[fill] (-2,0) circle (1.5pt);
\draw[fill] (-3.75,0) circle (1.5pt);
\draw[thick] (-1.5,0) circle[radius=0.5];

\end{tikzpicture}
\hspace{10em}
\begin{tikzpicture}[scale=1.2]
\draw[thick] (1, 0) -- (4,0);
\draw[fill] (1,0) circle (1.5pt);
\draw[fill] (4,0) circle (1.5pt);
\draw[opacity=0] (1,0) circle[radius=0.5]; % Invisible shape
\end{tikzpicture}

\caption{Graph $\G$ (left) in Example~\ref{ex:tadpole}. The optimal Neumann partitions coincide with those on the interval of the same length (right) considered in Example~\ref{ex:path}.}
\label{loop}
\end{figure}

\begin{example}\label{ex:tadpole}
Consider the equilateral tadpole graph $\Graph$ shown in Figure~\ref{loop}, which consists of a path attached to a loop. The first Betti number is $\beta = 1$ due to the presence of a cycle. Applying \txtb{\cite[Corollary~1.3]{HoKe21}}, we obtain the inequality
\begin{equation}\label{rohlederbound}
\mathcal{L}^N_k(\Graph) \ge \mu_{k}(\Graph).
\end{equation}

However, unlike in the case of trees, this bound is not sharp.

To see this, let $\mathcal I = [0, |\Graph|]$ be an interval of the same total length. By~\cite[Theorem~4.8]{HKMP21}, we have
\[
\mathcal{L}^N_k(\Graph) \ge \frac{\pi^2 k^2}{|\Graph|^2} = \mu_{k+1}(\mathcal I),
\]
while Proposition~\ref{thm:mainresultequiv} and Example~\ref{ex:path} yield
\[
\mathcal{L}^N_k(\Graph) \le \mathcal{L}^N_k(\mathcal I) = \mu_{k+1}(\mathcal I).
\]
Combining both bounds, we find
\[
\mathcal{L}^N_k(\Graph) = \mu_{k+1}(\mathcal I).
\]
Using standard rank-perturbation arguments, we have
\begin{equation}\label{perturbationbound}
\mu_{k}(\mathcal I) < \mu_{k}(\G) \le \mu_{k+1}(\mathcal I),\qquad k \in \mathbb N
\end{equation}
which implies that $\mathcal{L}^N_k(\Graph)$ cannot equal $\mu_{k+1}(\Graph)$. Let us now investigate whether we have equality in~\eqref{rohlederbound}. We will see that the sharpness of the bound in~\eqref{rohlederbound} can fail in the presence of cycles, reinforcing the necessity of restricting to trees for optimality.

We next study the number of nodes of the corresponding eigenfunctions and, in particular, the number in each Neumann domain, in order to compare these results with those obtained in Theorems~\ref{Courant-type},~\ref{sufficient and necessary nomain domains equipartition} and~\ref{main2} for tree graphs. Suppose that each edge has length $\ell$. Using the scattering approach from~\cite{KoSm99}, the eigenvalues of $\G$ can be determined.  \txtb{We must distinguish between two cases: an eigenvalue may satisfy
\begin{gather*}
\sin(\tfrac{\sqrt{\mu}\, \ell}{2}) =0,
\end{gather*}
for $\mu >0$ and have multiplicity $2$ (see Figure~\ref{fig:tadpoleeig} for a visualization).} This first type of eigenvalues can be classified via
\begin{gather*}
\mu_{I,k} = \frac{(2k\pi)^2}{\ell^2}, \qquad k \in \mathbb N \cup \{0\}.
\end{gather*}

\begin{figure}[ht]
\centering
\begin{tikzpicture}[scale=0.5]
\begin{axis}[
title={Non-generic eigenfunction for $\mu_{I,1}$ on the tadpole graph},
xlabel={Position along graph},
ylabel={Amplitude},
legend pos=north east,
grid=both,
width=12cm,
height=6cm
]
% Loop part
\addplot[blue, thick, domain=0:6.28, samples=200] {sin(deg(x))};
\addlegendentry{Loop: $\sin(x)$}
% Tail part
\addplot[green, thick, domain=6.28:12.56, samples=2] {0};
\addlegendentry{Tail: $0$}
% Nodes
\addplot[red, only marks, mark=*] coordinates {(0,0) (3.14,0) (6.28,0)};
\addlegendentry{Node}
\end{axis}
\end{tikzpicture}
\hspace{10em}
\begin{tikzpicture}[scale=0.5]
\begin{axis}[
title={Generic eigenfunction for $\mu_{I,1}$ on the tadpole graph},
xlabel={Position along graph},
ylabel={Amplitude},
legend pos=north east,
grid=both,
width=12cm,
height=6cm
]
% Loop part
\addplot[blue, thick, domain=0:6.28, samples=200] {cos(deg(x))};
\addlegendentry{Loop}
% Tail part
\addplot[green, thick, domain=6.28:12.56, samples=200] {cos(deg((x-6.28)))};
\addlegendentry{Tail}
% Nodes
\addplot[red, only marks, mark=*] coordinates {(1.57,0) (4.71,0)};
\addplot[orange, only marks, mark=*] coordinates {(7.85,0) (10.99,0)};
\end{axis}
\end{tikzpicture}
\caption{Eigenfunctions corresponding to $\mu_{I,1}$ on $\G$ via representation as a one-dimensional domain for $\ell=2\pi$: loop from $x=0$ to $2\pi$, tail from $x=2 \pi$ to $4\pi$.}
\label{fig:tadpoleeig}
\end{figure}

The other eigenvalues can be determined by the general secular equation (see Figure~\ref{fig:tadpoleigfunction})
\begin{gather*}
\tan(\sqrt{\mu} \ell) + 2 \tan( \tfrac{\sqrt{\mu}\ell}{2}) =0
\end{gather*}
and, more importantly, the corresponding eigenfunctions are generic. We have $\mu_1(\G)=0$ and
\begin{gather*}
\mu_2(\G)< \mu_3(\G) < \mu_{I,1}
\end{gather*}
and $\mu_4(\G)=\mu_5(\G) = \mu_{I,1}$.

\begin{figure}[ht]
\centering
\begin{tikzpicture}[scale=0.5]
\begin{axis}[
xlabel={$\sqrt{\mu}$},
ylabel={Function value},
xmin=0, xmax=1,
ymin=-5, ymax=5,
legend pos=north west,
grid=major,
samples=500,
domain=0:1,
restrict y to domain=-10:10,
unbounded coords=jump,
title={Plot of the secular function},
width=12cm,
height=6cm
]
\addplot[blue, thick] {tan(deg(2*pi*x))+ 2* tan(deg(pi*x))};
\addlegendentry{$\tan(\sqrt{\mu} \ell)+ 2\tan(\sqrt{\mu}\ell/2)$}
\addplot[red, only marks, mark=*] coordinates {(0,0) (0.304, 0) (0.696, 0) (1, 0)};
\end{axis}
\end{tikzpicture}
\hspace{10em}
\begin{tikzpicture}[scale=0.5]
\begin{axis}[
title={Generic eigenfunction for $\mu_{1}(\G)$ on the tadpole graph},
xlabel={Position along graph},
ylabel={Amplitude},
legend pos=north east,
grid=both,
width=12cm,
height=6cm
]
\addplot[blue, thick, domain=0:6.28, samples=200] {1};
\addlegendentry{Loop}
\addplot[green, thick, domain=6.28:12.56, samples=200] {1};
\addlegendentry{Tail}
\end{axis}
\end{tikzpicture}\\[1em]
\begin{tikzpicture}[scale=0.5]
\begin{axis}[
title={Generic eigenfunction for $\mu_{2}(\G)$ on the tadpole graph},
xlabel={Position along graph},
ylabel={Amplitude},
legend pos=north east,
grid=both,
width=12cm,
height=6cm
]
\addplot[blue, thick, domain=0:6.28, samples=200]
{cos(0.304*deg(2*pi))*cos(0.304*deg(x-3.14))};
\addlegendentry{Loop}
\addplot[green, thick, domain=6.28:12.56, samples=200]
{cos(deg(0.304*pi))*cos(0.304*deg(12.56-x))};
\addlegendentry{Tail}
\addplot[red, only marks, mark=*] coordinates {(12.56-3.14/(0.304*2),0)};
\addlegendentry{Node}
\end{axis}
\end{tikzpicture}
\hspace{10em}
\begin{tikzpicture}[scale=0.5]
\begin{axis}[
title={Generic eigenfunction for $\mu_{3}(\G)$ on the tadpole graph},
xlabel={Position along graph},
ylabel={Amplitude},
legend pos=north east,
grid=both,
width=12cm,
height=6cm
]
\addplot[blue, thick, domain=0:6.28, samples=200]
{cos(0.696*deg(2*pi))*cos(0.696*deg(x-3.14))};
\addlegendentry{Loop}
\addplot[green, thick, domain=6.28:12.56, samples=200]
{cos(deg(0.696*pi))*cos(0.696*deg(12.56-x))};
\addlegendentry{Tail}
\addplot[red, only marks, mark=*] coordinates {(3.14-3.14/(0.696*2), 0) (3.14+3.14/(0.696*2), 0)};
\addplot[orange, only marks, mark=*] coordinates {(12.56-3.14/(0.696*2),0)};
\end{axis}
\end{tikzpicture}
\caption{Plot of the secular function and of the first three eigenfunctions via representation as a one-dimensional domain for $\ell=2\pi$: loop from $x=0$ to $2\pi$, tail from $x=2\pi$ to $4\pi$.}
\label{fig:tadpoleigfunction}
\end{figure}

The numbers of nodes of the eigenfunctions corresponding to the first five eigenvalues are $0$, $1$, $3$, $2$ and $4$, respectively. The nodes of the remaining eigenfunctions can be determined similarly. In particular, each of the generic $(4j+3)$rd eigenfunctions for $j\in \mathbb N\cup \{0\}$ has $k=4j+3$ nodes, which is greater than the bound $k-1$ for tree graphs obtained in Theorem~\ref{Courant-type}.

Turning back to the partitions formed by the Neumann domains of these eigenfunctions, we note that each Neumann domain has a unique nodal point. Nevertheless, only the Neumann domains formed by the $(4j+1)$st eigenfunctions will be spectral minimal partitions for $j\in \mathbb N \cup \{0\}$ (see also Example~\ref{ex:path}), in which case we indeed have sharpness in~\eqref{rohlederbound}. This remains true even though the eigenfunctions are generic and the spectral minimizers for each of the clusters in the spectral minimal partitions (see Example~\ref{ex:path}) are generic.

In conclusion:
\begin{itemize}
\item any bound on the number of nodal points for general graphs, as in Theorem~\ref{Courant-type}, must take into account the number of independent cycles in the graph, i.e., the first Betti number;
\item there may be no correspondence between the partitions formed by the Neumann domains of eigenfunctions and the spectral minimal partitions, even if the underlying eigenfunctions are generic and each Neumann domain has a unique nodal point.
\end{itemize}

\end{example}

\begin{example}\label{ex:star}
Consider the star graph $\mathcal S_3$ with edge lengths $1,1$ and $1+\epsilon$, where $0\le \epsilon \le 1$.
\begin{figure}[H]
\centering
\begin{tikzpicture}[scale=0.9]
\draw[thick] (-2,0) -- (-3.75,0);
\draw[thick] (-2,0) -- (-1,1);
\draw[thick] (-2,0) -- (-1,-1); 
\draw[fill] (-2,0) circle (1.5pt);
\draw[fill] (-3.75,0) circle (1.5pt);
\draw[fill] (-1,-1) circle (1.5pt);
\draw[fill] (-1,1) circle (1.5pt);
\node at (-2.75,0) [anchor= south] {$1+\epsilon$};
\node at (-1.5,0.5) [anchor= west] {$1$};
\node at (-1.5,-0.5) [anchor= west] {$1$};
\end{tikzpicture} 
\caption{Graph $\G$ of Example~\ref{ex:star}.}
\label{fig:graphstar}
\end{figure}

Let us first consider the equilateral case $\epsilon=0$. Then its eigenvalues are known (see \cite[Example~3]{Fri05}) to be
\[
\mu_{3j+1}= \pi^2 j^2,\qquad 
\mu_{3j+2}=\mu_{3j+3}= \pi^2 \bigl(j+\tfrac{1}{2}\bigr)^2,
\qquad j \in \mathbb N_0.
\]
These eigenvalues have multiplicities, and the corresponding eigenfunctions are not generic. In particular, some eigenfunctions are not fully supported: they vanish on entire edges and have zeros only at finitely many points. The fully supported eigenfunctions \txtb{in the eigenspace of $\mu_{3j+2} = \mu_{3j+3}$} have $3j+3$ nodal domains but only $3j+1$ nodal points \txtb{in agreement with Theorem~\ref{Courant-type}}.

\begin{center}
\begin{figure}[ht]
\begin{tikzpicture}[scale=0.9]
\coordinate (c) at (0,0);
\foreach \i in {1,2,3} {
  \coordinate (v\i) at (120*\i:0.5);
  \coordinate (u\i) at (120*\i:0.7);
  \coordinate (w\i) at (120*\i:1.7);
  \coordinate (x\i) at (120*\i:1.9);
  \coordinate (y\i) at (120*\i:2.4);
  \coordinate (f\i) at ($(120*\i:.6)+({120*\i+90}:.3)$);
  \coordinate (g\i) at ($(120*\i:.6)+({120*\i-90}:.3)$);
  \coordinate (d\i) at ($(120*\i:1.8)+({120*\i+90}:.3)$);
  \coordinate (e\i) at ($(120*\i:1.8)+({120*\i-90}:.3)$);
  \draw[thick] (c) -- (v\i) (u\i)--(w\i) (x\i)--(y\i);
  \draw[dashed, thick] (f\i) -- (g\i) (d\i) -- (e\i);
  \draw[fill] (y\i) circle (1.75pt);
  \draw[fill=white] (v\i) circle (1.75pt) (u\i) circle (1.75pt) (w\i) circle (1.75pt) (x\i) circle (1.75pt);
}
\draw[fill] (c) circle (1.75pt);
\end{tikzpicture}
\hspace{10em} 
\begin{tikzpicture}[scale=0.9]
\coordinate (c) at (0,0);
\foreach \i in {1,2,3} {
  \coordinate (v\i) at (120*\i:.15);
  \coordinate (w\i) at (120*\i:.95);
  \coordinate (u\i) at (120*\i:1.15);
  \coordinate (x\i) at (120*\i:1.95);
  \coordinate (y\i) at (120*\i:2.15);
  \coordinate (z\i) at (120*\i:2.55);
  \coordinate (h\i) at ({60+120*\i}:.4);
  \coordinate (f\i) at ($(120*\i:1.05)+({120*\i+90}:.3)$);
  \coordinate (g\i) at ($(120*\i:1.05)+({120*\i-90}:.3)$);
  \coordinate (d\i) at ($(120*\i:2.05)+({120*\i+90}:.3)$);
  \coordinate (e\i) at ($(120*\i:2.05)+({120*\i-90}:.3)$);
  \draw[thick] (v\i) -- (w\i) (u\i) -- (x\i) (y\i) -- (z\i);
  \draw[dashed, thick] (f\i) -- (g\i) (d\i) -- (e\i);
  \draw[thick,dashed] (c) -- (h\i);
  \draw[fill] (z\i) circle (1.75pt);
  \draw[fill=white] (v\i) circle (1.75pt) (u\i) circle (1.75pt) (w\i) circle (1.75pt) (x\i) circle (1.75pt) (y\i) circle (1.75pt);
}
\end{tikzpicture}
\vspace{.5em}
\caption{Visual representation of nodal domains for Morse eigenfunctions corresponding to \txtb{$\mu_7$ and $\mu_8=\mu_9$}. White vertices denote nodal points.}
\label{fig:nodal-3star}
\end{figure}
\end{center}

The spectral minimal Neumann partition energies for $\mathcal S_3$ are given by (see \cite[Lemma~7.4]{HKMP21})
    \begin{equation}
        \mathcal L_{3j+1}^N(\mathcal S_3) = \pi^2 \left (j+\frac{1}{2}\right )^2, \qquad \mathcal L_{3j+2}^N(\mathcal S_3) =\mathcal L_{3j+3}^N(\mathcal S_3)=\pi^2 (j+1)^2
    \end{equation}
    for $j\in \mathbb N_0$. For generic eigenfunctions corresponding to $\mu_{3j+1}$, the Neumann partition coincides with the spectral minimal partition and forms an optimal $3j$-partition, as guaranteed by Theorem~\ref{main2}. For $\mu_{3j+2}= \mu_{3j+3}$, there exist Morse eigenfunctions whose Neumann partitions correspond to optimal $3j+1$-partitions.  

    \begin{figure}[ht]
\centering
\begin{tikzpicture}[scale=0.9]
\coordinate (c) at (0,0);
\foreach \i in {1,2,3} {
  \coordinate (v\i) at (120*\i:0.4);
  \coordinate (u\i) at (120*\i:0.6);
  \coordinate (w\i) at (120*\i:1.4);
  \coordinate (x\i) at (120*\i:1.6);
  \coordinate (y\i) at (120*\i:2.4);
  \coordinate (f\i) at ($(120*\i:.5)+({120*\i+90}:.3)$);
  \coordinate (g\i) at ($(120*\i:.5)+({120*\i-90}:.3)$);
  \coordinate (d\i) at ($(120*\i:1.5)+({120*\i+90}:.3)$);
  \coordinate (e\i) at ($(120*\i:1.5)+({120*\i-90}:.3)$);
  \draw[thick] (c) -- (v\i) (u\i)--(w\i) (x\i)--(y\i);
  \draw[dashed, thick] (f\i) -- (g\i) (d\i) -- (e\i);
  \draw[fill] (y\i) circle (1.75pt);
  \draw[fill] (v\i) circle (1.75pt) (u\i) circle (1.75pt) (w\i) circle (1.75pt) (x\i) circle (1.75pt);
}
\end{tikzpicture}
\hspace{10em}
\begin{tikzpicture}[scale=0.9]
\coordinate (c) at (0,0);
\foreach \i in {1,2,3} {
  \coordinate (v\i) at (120*\i:.15);
  \coordinate (w\i) at (120*\i:.81);
  \coordinate (u\i) at (120*\i:1.01);
  \coordinate (x\i) at (120*\i:1.67);
  \coordinate (y\i) at (120*\i:1.87);
  \coordinate (z\i) at (120*\i:2.53);
  \coordinate (h\i) at ({60+120*\i}:.4);
  \coordinate (f\i) at ($(120*\i:.91)+({120*\i+90}:.3)$);
  \coordinate (g\i) at ($(120*\i:.91)+({120*\i-90}:.3)$);
  \coordinate (d\i) at ($(120*\i:1.77)+({120*\i+90}:.3)$);
  \coordinate (e\i) at ($(120*\i:1.77)+({120*\i-90}:.3)$);
  \draw[thick] (v\i) -- (w\i) (u\i) -- (x\i) (y\i) -- (z\i);
  \draw[dashed, thick] (f\i) -- (g\i) (d\i) -- (e\i);
  \draw[thick,dashed] (c) -- (h\i);
  \draw[fill] (z\i) circle (1.75pt);
  \draw[fill] (v\i) circle (1.75pt) (u\i) circle (1.75pt) (w\i) circle (1.75pt) (x\i) circle (1.75pt) (y\i) circle (1.75pt);
}
\end{tikzpicture}
\caption{Neumann domains of the Morse eigenfunctions corresponding to $\mu_7=\mu_8$ and $\mu_9$.}
\label{fig:neumann-3star}
\end{figure}

    \txtb{
    The eigenfunctions to $\mu_{3j+1}$ are generic and by Theorem~\ref{main2} since each of its Neumann domains contains precisely one node, it corresponds to an optimal $3j$-partition for $j\ge 1$. For the nonsimple eigenvalues $\mu_{3j+2}=\mu_{3j+3}$ for each $j\in \mathbb N_0$ there exists a Morse eigenfunction whose Neumann partition corresponds to an optimal $3j+1$-partition.
    
     However, there will not exist a Neumann partition of an eigenfunction that corresponds to an optimal $3j+2$-partition. In fact, there does not even exist an exhaustive $3j+2$-partitions for $j\in \mathbb N_0$, which is an equipartition. Let us demonstrate this for $j=0$ with the general case being similar. Then $\mu_2(\mathcal S_3)=\mu_3(\mathcal S_3)$ which generates only one Neumann domain and $$\mathcal{L}^N_1(\mathcal S_3)=\mu_2(\mathcal S_3)=\mu_3(\G).$$ However, $$\mathcal{L}^N_2(\mathcal S_3) = \mu_4(\mathcal S_3),$$ and by standard domain monotonicity argument this implies that the spectral minimal $2$-partition consists of an interval of length $1$ and length $2$, which is not an equipartition.

Next, consider the perturbed case $0<\epsilon< 1$. Then $\mu_2(\mathcal S_3), \mu_3(\mathcal S_3), \mu_4(\mathcal S_3)$ become simple eigenvalues. We have as before $\mathcal L^N_1(\mathcal S_3) = \mu_2(\mathcal S_3)$.   There does not exist a Morse eigenfunction associated to $\mu_3(\mathcal S_3)$ and $\mu_4(\mathcal S_3)$ that consists of two Neumann domains, for which one has two nodal points.  Let us now see, whether a $2$-equipartition exists in this case. The possible configurations for the spectral minimal $2$-partition are shown in Figure~\ref{fig:graphstar2part}:
\begin{figure}[H]
\centering
\begin{tikzpicture}[scale=0.9]
\draw[thick] (-2.75,0) -- (-4.25,0);
\draw[thick] (-2,0) -- (-1,1);
\draw[thick] (-2,0) -- (-1,-1);
\draw[thick] (-2,0) -- (-2.2,0);
\draw[dashed] (-2.5,-0.2) -- (-2.5, 0.2);
\draw[fill] (-2.2,0) circle (1.5pt);
\draw[fill] (-2,0) circle (1.5pt);
\draw[fill] (-2.75,0) circle (1.5pt);
\draw[fill] (-4.25,0) circle (1.5pt);
\draw[fill] (-1,-1) circle (1.5pt);
\draw[fill] (-1,1) circle (1.5pt);

\draw[thick] (2.2,0) -- (3.75,0);
\draw[thick] (4.2,0) -- (4.77,1);
\draw[thick] (4.2,0) -- (4.77,-1);
\draw[fill] (2.2,0) circle (1.5pt);

\draw[fill] (4.2,0) circle (1.5pt);
\draw[fill] (3.75,0) circle (1.5pt);
\draw[fill] (4.77,1) circle (1.5pt);
\draw[fill] (4.77,-1) circle (1.5pt);
\draw[dashed] (4,-0.2) -- (4, 0.2);
\draw[thick] (8.2,0) -- (9.75,0);
\draw[thick] (9.75,0) -- (10.77,1);
\draw[thick] (10,-0.2) -- (10.77,-1);
\draw[fill] (8.2,0) circle (1.5pt);
\draw[dashed] (10,0) -- (9.75, -0.2);
\draw[fill] (10,-0.2) circle (1.5pt);
\draw[fill] (9.75,0) circle (1.5pt);
\draw[fill] (10.77,1) circle (1.5pt);
\draw[fill] (10.77,-1) circle (1.5 pt);

\end{tikzpicture}
\caption{Different setting of $2-$partitions of the graph $\G$.}
\label{fig:graphstar2part}
\end{figure}

Let us describe the different configurations as shown in Figure~\ref{fig:graphstar2part}:
\begin{itemize}
\item (Left) One cell is an interval with length $2$ with a pendant edge with length $0 <\delta <1$ and the other is an interval with length $1+\epsilon-\delta$. The value of $\mu_2$ of the second cell is strictly greater than the first one.
\item (Center) The partition consists of two intervals: one of length $2$ and the other of length $1+\epsilon$. Again, the value of $\mu_2$ of the second cell is greater than the first one.
\item (Right) Similarly, any other cut will result in an interval with length smaller or equal $1$, where the value of $\mu_2$ of the second cell is greater than the first one.
\end{itemize}
None of these configurations yield equipartitions. In particular, the optimal partition (given as in Figure~\ref{fig:graphstar2part} (center)) is not an equipartition. 

If  $\epsilon=1$, there exists an equipartition consisting of two intervals with length $2$, which will be an equipartition. Suprisingly, one has $\mathcal L_2^N(\mathcal S_3) = \mu_3(\mathcal S_3)$ in this case despite the non-genericity of the eigenvalue, but there does not exist a corresponding Neumann partition of an eigenfunction, which provides the spectral minimal partition.   

The failure of the correspondence between Neumann domains of eigenfunctions and the spectral minimal $3$-partition is rooted in the fact that the eigenfunction corresponding to $\mu_3(\mathcal S_3)$ are not generic. Consequently, \autoref{main2} does not apply. This example thus highlights that genericity is essential for the correspondence between Neumann partitions of eigenfunctions and spectral minimal partitions.
}

This example shows that, even on a tree, non-generic eigenfunctions may lead to Neumann partitions whose energies do not coincide with the value of the spectral minimal partition functional predicted by Theorem~\ref{main2}. In particular, without the genericity assumption, the minimal Neumann spectral $k$-partition energy need not equal $\mu_{k+1}(\G)$, although the Courant-type bound on nodal points from Theorem~\ref{Courant-type} still holds.
\end{example}

\end{document}